\newtheorem{thm}{Theorem}[section]
\newtheorem{cor}[thm]{Corollary}
\newtheorem{prop}[thm]{Proposition}
\newtheorem{thmx}{Theorem}
\theoremstyle{definition}
\newtheorem{exam}{Example}
\theoremstyle{remark}
\newcommand{\EC}{\widehat{\mathbb{C}}}
\newcommand{\C}{\mathbb{C}}
\newcommand{\N}{\mathbb{N}}
\newcommand{\R}{\mathbb{R}}
\newcommand{\ii}{\textup{i}}
\begin{document}

\author{FEI YANG}
\address{Department of Mathematics, Nanjing University, Nanjing, 210093, P. R. China}
\email{yangfei@nju.edu.cn}

\title{A criterion to generate carpet Julia sets}

\begin{abstract}
It was known that the Sierpi\'{n}ski carpets can appear as the Julia sets in the families of some rational maps. In this article we present a criterion that guarantees the existence of the carpet Julia sets in some rational maps having exactly one fixed (super-) attracting or parabolic basin. We show that this criterion can be applied to some well known rational maps, such as McMullen maps and Morosawa-Pilgrim family. Moreover, we give also some special examples whose Julia sets are Sierpi\'{n}ski carpets.
\end{abstract}

\subjclass[2010]{Primary 37F45; Secondary 37F10, 37F30}

\keywords{Julia set; Sierpi\'{n}ski carpet; Fatou components}

\date{\today}



\maketitle


\section{Introduction}

Let $\EC$ be the Riemann sphere. According to \cite{Why58}, a set $S\subset\EC$ is called a \emph{Sierpi\'{n}ski carpet} (\emph{carpet} in short) if $S$ is compact, connected, locally connected, has empty interior and has the property that the complementary domains are bounded by pairwise disjoint simple closed curves.
For any rational map $f:\EC\to\EC$ with degree at least two, the \emph{Fatou set} $F(f)$ of $f$ is defined as the maximal open subset on $\EC$ in which the sequence of the iterates $\{f^{\circ n}\}_{n\geq 0}$ forms a normal family in the sense of Montel. The \emph{Julia set} $J(f)$ of $f$ is the complement of the Fatou set in $\EC$. Each connected component of the Fatou set is called a \emph{Fatou component}.

It was known that the Julia sets of all rational maps are compact and have empty interior (if the Julia set is not equal to $\EC$, see \cite[Corollary 4.11]{Mil06}), and in some cases (for example, when the rational maps are post-critically finite), the Julia sets are also connected and locally connected. Hence an interesting question on the topology of the Julia sets of the rational maps is: could a Julia set be a Sierpi\'{n}ski carpet? The answer to this question is not trivial since it is not easy to verify some conditions that a Sierpi\'{n}ski carpet should be satisfied, especially the properties that the boundaries of all Fatou components are simple closed curves and that the intersection of the closure of any two Fatou components is empty. Obviously, the Julia set of any polynomial cannot be a Sierpi\'{n}ski carpet. Indeed, any polynomial $f$ with $\deg(f)\geq 2$ has at infinity a super-attracting fixed point and the Julia set is its immediate basin of attraction (see \cite[Corollary 4.12]{Mil06}). If $f$ has a bounded Fatou component then the closure of this component must intersect with the closure of the basin of infinity. On the other hand, if $f$ does not have any bounded Fatou components, then the boundary of the basin of infinity cannot be a simple closed curve and hence its Julia set is none a Sierpi\'{n}ski carpet.

For the carpet Julia sets of rational maps, the first example was found by Milnor and Tan Lei \cite[Appendix F]{Mil93}. Their example was a quadratic rational map with two periodic super-attracting cycles and one of the period is $3$ and the other one is $4$. Later, in his Ph.d thesis Pilgrim found a carpet Julia set of a cubic rational map
\cite[\S 5.6]{Pil94} by considering the branched covering of the sphere to itself. As the Julia sets of rational maps, the Sierpi\'{n}ski carpets appeared in many literatures from then on. For example, McMullen maps (see \cite{DLU05}), Morosawa-Pilgrim family (see \cite{Ste08}) and the generalized McMullen maps (see \cite{XQY14}).

One may wonder why the Sierpi\'{n}ski carpets attracted many people's interest. On the one hand, the appearance of the carpet Julia sets reveals the richness of the dynamics of rational maps. In fact, how to classify the carpet Julia sets dynamically is still an open problem (one may refer to \cite{DP09} for a partial answer to this question). On the other hand, in the geometry group theory, the quasisymmetric equivalences of the Sierpi\'{n}ski carpets has been studied extensively since it was partially motivated by the Kapovich-Kleiner conjecture \cite{KK00}. Recently, the quasisymmetric geometry of the carpet Julia sets has also been considered in \cite{BLM16}, \cite{QYZ14} and \cite{QYY16}.

In view of the fact that the previous proof methods for the existence of the carpet Julia sets are different (indeed, each method works only for one special family of rational maps),
a natural question is: does there exist a general method to judge whether a given rational map has a carpet Julia set? As a partial answer to this question, we give a useful criterion in this article and hence shed some lights on this problem.

\begin{thmx}\label{thm:main}
Let $f$ be a rational map satisfying the following conditions:
\begin{enumerate}
\item The Julia set of $f$ is connected;
\item There exist two different Fatou components $U$ and $V$ of $f$ such that $f(U)=U$ and $f^{-1}(U)=U\cup V$;
\item If $W$ is a connected component of $f^{-1}(V)$, then $\deg(f|_W)>\deg(f|_U)$; and
\item The intersection of any critical orbit with $U$ is non-empty; In particular, $f$ has degree at least $3$.
\end{enumerate}
Then the Julia set of $f$ is a Sierpi\'{n}ski carpet.
\end{thmx}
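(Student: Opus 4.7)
The plan is to verify Whyburn's characterization of a Sierpi\'{n}ski carpet for $J(f)$: compactness, connectedness, local connectedness, empty interior, and complementary domains bounded by pairwise disjoint Jordan curves. Compactness is automatic, empty interior follows since $U\subset F(f)$ forces $J(f)\neq\EC$, and connectedness is hypothesis~(1). Setting $d_U=\deg(f|_U)$, condition~(2) gives $d_U+\deg(f|_V)=\deg(f)$, and condition~(4) puts every critical orbit in the basin of $U$, so $U$ is a (super-)attracting or parabolic basin and $J(f)$ contains no critical points. The three nontrivial items remaining are (I)~local connectedness of $J(f)$, (II)~every Fatou component is a Jordan domain, and (III)~the closures of distinct Fatou components are pairwise disjoint.

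For (I), the absence of critical points in $J(f)$ together with the capture of every critical orbit by $U$ makes $f$ sub-hyperbolic (in the (super-)attracting case) or semi-hyperbolic (in the parabolic case) on a neighborhood of $J(f)$; the theorems of Carleson--Jones--Yoccoz and Tan--Yin then yield local connectedness.

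For (II), the model strategy is to represent $f|_U$ via a Riemann map $\phi:\D\to U$ as a finite Blaschke product of degree $d_U$, obtained from B\"ottcher/Koenigs/parabolic coordinates propagated by the dynamics. Local connectedness from (I) extends $\phi$ continuously to $\overline{\D}$ by Carath\'eodory. Then $\partial U$ is a Jordan curve iff $\phi$ is injective on $\partial\D$, equivalently $\partial U$ has no ``pinch points''; a pinch point at $x\in\partial U$ forces the closure of another Fatou component to contain $x$, so (II) for $\partial U$ is reduced to (III). Granted (III), $f|_V:V\to U$ is a branched covering onto a Jordan domain, and a standard lifting argument using that $\partial V\subset J(f)$ is locally connected makes $\partial V$ Jordan; iterating along preimages of $U$ extends this to every Fatou component.

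The main obstacle is (III), whose proof must use condition~(3). I would argue by contradiction: assume $p\in\partial U\cap\partial V$. Since $f^{-1}(U)=U\cup V$, the local picture at $p$ shows $\partial U$ being pinched by $V$, producing two distinct prime ends of $U$ landing at $p$. Tracking preimages: each of the $d_U$ preimages of $p$ in $\partial U$ (under the degree-$d_U$ boundary covering $f|_{\partial U}$) is again a landing point of two prime ends of $U$, with the second prime end lying in the closure of some component $W\subset f^{-1}(V)$. Counting the resulting pinch configurations via Riemann--Hurwitz on $f|_W:W\to V$ against the degree-$d_U$ covering on $\partial U$ yields an upper bound $\deg(f|_W)\le d_U$, contradicting condition~(3). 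Hence $\overline{U}\cap\overline{V}=\emptyset$, which retroactively closes (II) for $\partial U$. Propagating by induction on preimage depth then establishes both Jordan boundaries and pairwise disjoint closures for every Fatou component, completing the carpet verification.
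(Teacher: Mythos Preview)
Your overall plan (verify Whyburn's characterization; get local connectedness from Tan--Yin; then show Jordan boundaries and pairwise disjoint closures) matches the paper, but the way you interlock (II) and (III), and the point at which you invoke condition~(c), does not work as written.

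The central gap is your reduction of (II) to (III). You assert that a pinch point $x\in\partial U$ must lie on $\overline{\Omega}$ for some other Fatou component $\Omega$, and then you only prove $\overline{U}\cap\overline{V}=\emptyset$. Two problems: first, the claim itself is unjustified---$x$ could be the tip of a ``hair'' of $J(f)$ sticking into $U$ with no single Fatou component accumulating at $x$ (local connectivity of $J(f)$ does not by itself forbid this); second, even granting the claim, $\Omega$ need not be $V$, so knowing $\overline{U}\cap\overline{V}=\emptyset$ does not rule out the pinch. Your ``induction on preimage depth'' cannot rescue this, because the induction needs $\partial U$ Jordan to start, and you have not established that yet. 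This is a genuine circularity.

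Relatedly, your use of condition~(c) is aimed at the wrong target. You try to extract $\deg(f|_W)\le d_U$ from a hypothetical touching point $p\in\partial U\cap\partial V$ via ``pinch configurations'', but no concrete inequality is derived, and it is unclear why preimages of $p$ on $\partial U$ should lie on $\partial W$ at all. In the paper, condition~(c) is used one level higher: one looks at the \emph{components of $\EC\setminus\overline{U}$}. By \cite{Pil96} each such component is a Jordan domain. If the components $A_V\supset V$ and $A_W\supset W$ were distinct, then $f(A_W)=A_V$ and
\[
\deg(f|_W)\le \deg(f|_{A_W})=\deg(f|_{\partial A_W})\le \deg(f|_{\partial U})=d_U,
\]
contradicting~(c). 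Hence $\EC\setminus\overline{U}$ is a single Jordan domain $A_V$. The remaining ``hair'' obstruction ($\partial U\supsetneq\partial A_V$) is then dispatched by a Montel argument: any region of $U$ cut off from $A_V$ by two access arcs to a point of $\partial A_V$ omits $V$ under all iterates, hence lies in $F(f)$. Only after $\partial U$ is known to be a Jordan curve does one prove $\partial U\cap\partial V=\emptyset$, and that step uses condition~(d), not~(c): a point in $\partial U\cap\partial V$ would be a critical point of $f$ lying in $J(f)$. So your sketch has both the order and the mechanism reversed.
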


From the assumptions (b) and (d) in Theorem \ref{thm:main}, all the critical points of $f$ are iterated eventually to the unique fixed Fatou component $U$, which is either attracting, super-attracting or parabolic. The key of the proof of Theorem \ref{thm:main} is to show that the immediate basin of the unique fixed Fatou component of $f$ is a Jordan domain.

We will give some examples to show that each of the assumptions in Theorem \ref{thm:main} is necessary. Specifically, we prove that there exist rational maps whose Julia sets are not Sierpi\'{n}ski carpets provided only three of four conditions in Theorem \ref{thm:main} are satisfied. This means that our criterion in Theorem \ref{thm:main} has some kind of sharpness.

It seems that the condition (c) is a bit technical. Actually an intuitional idea can be explained as following: In order to prove that $U$ is bounded by a simple closed curve, we need to show that $\EC\setminus\overline{U}$ consists of exactly one connected component first. Assume in the contrary that $\EC\setminus\overline{U}$ contains two \textit{different} components $A_V$ and $A_W$ which contain $V$ and $W$ respectively, where $f(W)=V$. Since the local degree cannot exceed the global degree, it means that $\deg(f|_W)\leq \deg(f|_{A_W})=\deg(f|_{\partial A_W})\leq \deg(f|_U)$. However this violates the condition (c) (see the proof of Theorem \ref{thm:main} in \S\ref{sec-proof} for details).

\vskip0.2cm
Let us give some applications of Theorem \ref{thm:main}. According to the assumptions in Theorem \ref{thm:main}, we assume that $\deg(f|_U:U\to U)=d_\infty$ and $\deg(f)=d_0+d_\infty$, where $d_0\geq 1$ and $d_\infty\geq 2$. Since $f^{-1}(U)=U\cup V$, we have $\deg(f|_V:V\to U)=d_0$. By a standard quasiconformal surgery (based on the connectivity of the Julia sets), one can assume that each of the Fatou components $U$ and $V$ contains at most one critical point (counted without multiplicity). This means that $U$ is a fixed super-attracting basin or a parabolic basin. To simplify the expressions of the rational maps, without loss of generality, we assume that $f$ has a unique fixed super-attracting basin\footnote{Theorem \ref{thm:main} can be applied to the parabolic rational maps of course. But the main point here is to give an application of a canonical family of hyperbolic rational maps. See \S\ref{subsec-para} for an example of parabolic rational map with a carpet Julia set.} centered at $\infty$, $f^{-1}(\infty)=\{\infty, 0\}$ and $f^{-1}(0)=\{b_1,\cdots,b_n\}$, where $n\geq 1$ and $b_1=1$. This means that $f$ has the form
\begin{equation}\label{eq:family}
f_\lambda(z)=\frac{\lambda}{z^{d_0}}\prod_{i=1}^n (z-b_i)^{d_i},
\end{equation}
where $\sum_{i=1}^n d_i=d_0+d_\infty$, $d_0\geq 1$ and $\lambda,b_i\in\C\setminus\{0\}$. Based on the condition (c) in Theorem \ref{thm:main}, we require further that $d_i\geq d_\infty+1\geq 3$ for all $1\leq i\leq n$. As an immediate corollary of Theorem \ref{thm:main}, we have

\begin{cor}\label{cor:carpet-1}
The Julia set of $f_\lambda$ is a Sierpi\'{n}ski carpet provided it is connected and all critical points of $f_\lambda$ escape to the infinity under iteration.
\end{cor}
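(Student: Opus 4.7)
The plan is to verify each of the four hypotheses of Theorem \ref{thm:main} for the map $f_\lambda$, after which the conclusion is immediate. Set $d:=d_0+d_\infty=\deg(f_\lambda)$, let $U$ denote the immediate basin of the super-attracting fixed point $\infty$ (super-attracting because the local degree of $f_\lambda$ at $\infty$ is $d_\infty\ge 2$), and let $V$ denote the Fatou component containing $0$.

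Condition (a) is precisely the first hypothesis of the corollary. For condition (d), every critical orbit is assumed to converge to $\infty$ and must therefore enter the open neighborhood $U$ of $\infty$ after finitely many iterates, while $\deg(f_\lambda)=d_0+d_\infty\ge 1+2=3$ by the standing assumption $d_i\ge d_\infty+1\ge 3$.

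For condition (b), $f_\lambda(U)=U$ holds automatically, and $f_\lambda(V)=U$ because $f_\lambda(0)=\infty\in U$. Since $f_\lambda^{-1}(\infty)=\{0,\infty\}$, every Fatou component mapping onto $U$ must contain one of these two points, so $f_\lambda^{-1}(U)=U\cup V$ \emph{once we know} $U\neq V$. The main obstacle is to rule out $U=V$. I would argue by contradiction: if $V=U$, then $f_\lambda^{-1}(U)=U$, making $U$ completely invariant; combined with the hypothesis that every critical orbit meets $U$, this forces every critical point of $f_\lambda$ to lie in $U$. The total critical multiplicity inside $U$ is thus $2d-2$, and Riemann--Hurwitz applied to the proper holomorphic map $f_\lambda|_U\colon U\to U$ of degree $d$ yields $\chi(U)(d-1)=2d-2$, so $\chi(U)=2$. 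But this forces $U=\EC$, contradicting $J(f_\lambda)\neq\emptyset$.

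For condition (c), any component $W$ of $f_\lambda^{-1}(V)$ must contain at least one $b_i$ from $f_\lambda^{-1}(0)=\{b_1,\ldots,b_n\}$, at which the local degree of $f_\lambda$ equals $d_i\ge d_\infty+1$. Hence $\deg(f_\lambda|_W)\ge d_i>d_\infty=\deg(f_\lambda|_U)$. With all four hypotheses verified, Theorem \ref{thm:main} gives that $J(f_\lambda)$ is a Sierpi\'{n}ski carpet.
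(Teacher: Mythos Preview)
Your proof is correct and follows the same route as the paper: the corollary is stated there as an ``immediate'' consequence of Theorem~\ref{thm:main}, and the only remark the paper adds is the observation (which you also make) that several of the $b_i$ may lie in the same component $W$, in which case $\deg(f_\lambda|_W)\ge d_i>d_\infty$ still holds. Your verification that $U\neq V$ via Riemann--Hurwitz is a detail the paper leaves implicit; it is valid (and could be shortened slightly by noting that connectedness of $J(f_\lambda)$ already forces $\chi(U)=1$, contradicting $\chi(U)=2$).
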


We would like to remind the reader that the different $b_i$'s are not necessarily in different Fatou components. Indeed, if $b_i$ and $b_j$ are contained in the same Fatou component $W_i$, then $\deg(f|_{W_i}:W_i\to V)\geq d_i+d_j\geq d_\infty+1=\deg(f|_U:U\to U)+1$, which still satisfies the condition (c).

\vskip0.2cm
Let us consider the simplest case first. Set $n=1$ in \eqref{eq:family}. This means that $d_1=d_0+d_\infty\geq 3$ and the uni-parametric family $f_\lambda$ writes as
\begin{equation}\label{equ:F-lambda}
F_\lambda(z)=\lambda\,\frac{(z-1)^{d_0+d_\infty}}{z^{d_0}}, \text{ where }\lambda\in\C\setminus\{0\}.
\end{equation}
Note that $F_\lambda$ has the critical orbit $1\overset{(d_0+d_\infty)}{\longmapsto} 0\overset{(d_0)}{\longmapsto}\infty\overset{(d_\infty)}{\longmapsto}\infty$ and a free critical point $z_\lambda=-\frac{d_0}{d_\infty}$ with local degree $2$. As an application of Corollary \ref{cor:carpet-1}, we have

\begin{prop}\label{prop:carpet-3}
The Julia set of $F_\lambda$ is a Sierpi\'{n}ski carpet if $z_\lambda\not\in V$ but $F_\lambda^{\circ k}(z_\lambda)$ $\in V$ for some $k\geq 2$, where $V$ is the Fatou component of $F_\lambda$ containing $0$.
\end{prop}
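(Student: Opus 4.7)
The plan is to verify the four hypotheses of Theorem~\ref{thm:main} for $F_\lambda$. From \eqref{equ:F-lambda} the critical points of $F_\lambda$ are $\infty$ (local degree $d_\infty$), $0$ (local degree $d_0$), the zero $1$ of the numerator (local degree $d_0+d_\infty$), and the free critical point $z_\lambda=-d_0/d_\infty$ (local degree $2$). Their ramification contributions sum to $2(d_0+d_\infty)-2$, so this is the complete critical set, and the orbits $\infty\to\infty$, $1\to 0\to\infty$ together with the hypothesis on $z_\lambda$ show that every critical orbit meets $U$, giving condition~(d). By Fatou's theorem this also forces $U$ to be the only non-repelling Fatou cycle; consequently every Fatou component lies in the basin of $\infty$ and has a well-defined \emph{level} $n\ge 0$, namely the smallest integer with $F_\lambda^{\circ n}(U')=U$.

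For condition~(b): since $F_\lambda^{-1}(\infty)=\{\infty,0\}$, the Fatou preimage $F_\lambda^{-1}(U)$ is exactly $U\cup V$ provided $V\ne U$. If instead $V=U$, then $U$ is completely invariant and the Fatou set coincides with $U$; then $z_\lambda\notin V$ places $z_\lambda$ in the Julia set, contradicting $F_\lambda^{\circ k}(z_\lambda)\in V$. For condition~(c): $F_\lambda^{-1}(0)=\{1\}$ with multiplicity $d_0+d_\infty$, so every component of $F_\lambda^{-1}(V)$ must contain $1$; hence there is a single such component $W$ with $\deg(F_\lambda|_W)=d_0+d_\infty>d_\infty=\deg(F_\lambda|_U)$.

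The substantive step is condition~(a), connectedness of $J(F_\lambda)$. I would use the standard equivalence that $J(F_\lambda)$ is connected if and only if every Fatou component is simply connected, and induct on the level. At the base, Riemann-Hurwitz applied to $F_\lambda|_U$, $F_\lambda|_V$, $F_\lambda|_W$ yields $\chi=1$ on each, using the single critical points $\infty$, $0$, $1$ whose local degrees $d_\infty$, $d_0$, $d_0+d_\infty$ exactly match the degrees of $F_\lambda$ on the respective components. The hypothesis $k\ge 2$ enters here in a crucial way: it forbids $z_\lambda\in W$, so that $1$ is the only critical point in $W$ and the balance is exact.

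For the inductive step at level $n\ge 3$, let $U'$ be a Fatou component; by induction $F_\lambda(U')$ is simply connected. The only critical point that can lie in $U'$ is $z_\lambda$, since $\infty,0,1$ sit at levels $0,1,2$. If $z_\lambda\notin U'$, then $F_\lambda|_{U'}$ is unramified, so $U'$ is a connected finite cover of a disc and hence a disc. If $z_\lambda\in U'$, Riemann-Hurwitz gives $\chi(U')=\deg(F_\lambda|_{U'})-1$; combining $\chi(U')\le 1$ with $\deg(F_\lambda|_{U'})\ge 2$ (since $z_\lambda$ itself has local degree $2$) forces $\deg(F_\lambda|_{U'})=2$ and $\chi(U')=1$. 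Thus every Fatou component is a disc, $J(F_\lambda)$ is connected, and Theorem~\ref{thm:main} supplies the carpet conclusion. The most delicate point I anticipate is the bookkeeping on the unique component containing $z_\lambda$, where the enumeration of critical points is needed to rule out extra ramification that would make it multiply connected.
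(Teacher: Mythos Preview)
Your argument is correct and follows the same route as the paper's: first show $U\neq V$, then prove each Fatou component is simply connected by Riemann--Hurwitz induction on its level, and finally invoke Theorem~\ref{thm:main} (via Corollary~\ref{cor:carpet-1}). The only point where the paper is slightly more careful is the base case for $U$ itself: applying Riemann--Hurwitz to the self-map $F_\lambda|_U:U\to U$ gives $(1-d_\infty)\chi(U)=1-d_\infty$, which does not by itself exclude $\chi(U)=-\infty$; the paper handles this with the standard exhaustion $U=\bigcup_n U_n$ by simply connected pull-backs of a small disc around~$\infty$. Also, in your $U=V$ contradiction you don't actually need ``the Fatou set coincides with $U$''---backward invariance of $U$ already forces $z_\lambda\in U$ from $F_\lambda^{\circ k}(z_\lambda)\in U$, which is how the paper argues.
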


According to \cite{DLU05} and \cite{Ste08}, McMullen maps and Morosawa-Pilgrim family, respectively, are defined by
\begin{equation}\label{eq:Mc-Mor-Pil}
g_\mu(z)=z^{d_\infty}+\frac{\mu}{z^{d_0}} \text{\quad and \quad} h_\nu(z)=\nu\left(1+\frac{4z^3}{27(1-z)}\right),
\end{equation}
where $d_0\geq 1$, $d_\infty\geq 2$ and $\mu, \nu\in\C\setminus\{0\}$ are parameters. A straightforward calculation shows that the map $z\mapsto -\frac{1}{\mu}z^{d_0+d_\infty}$ semi-conjugates $g_\mu$ to $F_\lambda$ with $\lambda=(-\mu)^{d_\infty-1}$. Therefore, the free critical point $z_\lambda=-\frac{d_0}{d_\infty}$ of $F_\lambda$ corresponds to $d_0+d_\infty$ free critical points
$c_j=(\tfrac{d_0}{d_\infty}\mu)^{1/(d_0+d_\infty)}e^{2\pi\ii j/(d_0+d_\infty)}$ of $g_\mu$, where $1\leq j\leq d_0+d_\infty$. On the other hand, $h_\nu$ has the critical orbits
\begin{equation*}
\tfrac{3}{2}\overset{(2)}{\longmapsto}0\overset{(3)}{\longmapsto}\nu\longmapsto\cdots \text{ and } \infty\overset{(2)}{\longmapsto}\infty.
\end{equation*}
As an immediate corollary of Proposition \ref{prop:carpet-3}, we have the following result.

\begin{cor}[{Devaney-Look-Uminsky, Steinmetz}]\label{cor:McM-Mor-Pil}
Let $U_\mu$ (resp. $U_\nu$) be the immediate super-attracting basin of $\infty$ of $g_\mu$ (resp. $h_\nu$).
\begin{enumerate}
\item If there exists an integer $k\geq 2$ such that $g_\mu^{\circ k}(c_j)\in g_\mu^{-1}(U_\mu)\setminus U_\mu$ for some $1\leq j\leq d_0+d_\infty$, then the Julia set of $g_\mu$ is a Sierpi\'{n}ski carpet.
\item If $\nu\in h_\nu^{-1}(U_\nu)\setminus U_\nu$, then the Julia set of $h_\nu$ is a Sierpi\'{n}ski carpet.
\end{enumerate}
\end{cor}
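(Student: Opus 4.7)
The two parts call for different strategies. For part (a), the plan is to push the conclusion of Proposition~\ref{prop:carpet-3} through the semi-conjugation $\pi\circ g_\mu=F_\lambda\circ\pi$ already recorded in the text, where $\pi(z)=-z^{d_0+d_\infty}/\mu$ and $\lambda=(-\mu)^{d_\infty-1}$, and then transfer the resulting carpet property from $J(F_\lambda)$ back up to $J(g_\mu)=\pi^{-1}(J(F_\lambda))$. For part (b), the plan is instead to apply Theorem~\ref{thm:main} directly to $h_\nu$ with $U=U_\nu$ and $V=T_\nu$, since the critical portrait of $h_\nu$ plugs into the four hypotheses cleanly.

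For part (a), I would first identify the relevant Fatou components. Since $\pi^{-1}(\infty)=\{\infty\}$ and $\pi^{-1}(0)=\{0\}$, each with full local multiplicity $d_0+d_\infty$, a degree count forces $\pi^{-1}(U)$ to be a single connected Fatou component (equal to $U_\mu$), and likewise $\pi^{-1}(V)=T_\mu$. The identity $\pi(c_j)=-((d_0/d_\infty)\mu)/\mu=z_\lambda$ already computed in the text then translates the assumption: applying $\pi$ to $g_\mu^{\circ k}(c_j)\in T_\mu$ gives $F_\lambda^{\circ k}(z_\lambda)=\pi(g_\mu^{\circ k}(c_j))\in V$, and $k\geq 2$ excludes $z_\lambda\in V$ (otherwise all iterates from step $1$ onward would lie in the invariant component $U$, contradicting $F_\lambda^{\circ k}(z_\lambda)\in V$). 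Proposition~\ref{prop:carpet-3} then delivers that $J(F_\lambda)$ is a Sierpi\'nski carpet.

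The main obstacle will be transferring this carpet property from $J(F_\lambda)$ up to $J(g_\mu)$. Since both branch points of $\pi$ lie in the Fatou set, $\pi$ restricts to an unbranched $(d_0+d_\infty)$-fold cover $J(g_\mu)\to J(F_\lambda)$, which yields compactness, empty interior, and local connectivity of $J(g_\mu)$ at once. Connectivity of $J(g_\mu)$ I would deduce from simple connectivity of every Fatou component, proved by Riemann--Hurwitz induction starting from $U_\mu$ (a disk thanks to the single critical point at $\infty$). For the complementary domains, each Fatou component $W\neq U,V$ of $F_\lambda$ lifts under $\pi$ to $d_0+d_\infty$ disjoint Jordan domains of $g_\mu$, while $U_\mu$ and $T_\mu$ cover $U$ and $V$ as branched covers with one interior branch point each; the boundary map $\pi|_{\partial U_\mu}\colon\partial U_\mu\to\partial U$ is then an unbranched finite cover of a Jordan curve with connected total space (the boundary of a simply connected domain), so $\partial U_\mu$ is itself a Jordan curve, and similarly for $\partial T_\mu$. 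Pairwise disjointness of all these Jordan curves then follows from the disjointness of the Fatou-component closures of the carpet $J(F_\lambda)$ together with $\pi$ being a local homeomorphism at every point of $J(g_\mu)$.

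For part (b), I would verify conditions (a)--(d) of Theorem~\ref{thm:main} directly. Condition (b) is the identity $h_\nu^{-1}(U_\nu)=U_\nu\cup T_\nu$ (with $1\in T_\nu$), and condition (d) holds because $\infty\in U_\nu$ and, under the hypothesis $\nu\in T_\nu$, the critical orbits $3/2\mapsto 0\mapsto\nu$ and $0\mapsto\nu$ both reach $U_\nu$ at the next step. For condition (c), the decisive observation is that $0$ is a critical point of $h_\nu$ of local degree $3$ with $h_\nu(0)=\nu$, and the expansion $h_\nu(z)-\nu=4\nu z^3/(27(1-z))$ shows $h_\nu^{-1}(\nu)=\{0\}$ with multiplicity $3$; since $\deg(h_\nu)=3$, the component $W_0\ni 0$ of $h_\nu^{-1}(T_\nu)$ already absorbs the entire covering degree, so it is the unique such component and satisfies $\deg(h_\nu|_{W_0})=3>2=\deg(h_\nu|_{U_\nu})$. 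Condition (a), connectivity of $J(h_\nu)$, then follows from the same Riemann--Hurwitz induction. Theorem~\ref{thm:main} concludes part (b).
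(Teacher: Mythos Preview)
The paper offers no proof of this corollary beyond the phrase ``immediate corollary of Proposition~\ref{prop:carpet-3}'' after recording the semi-conjugacy $\pi\colon g_\mu\to F_\lambda$ and the critical portrait of $h_\nu$. Your argument is correct and fills in what the paper leaves tacit.

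For part~(a) you follow exactly the route the paper sets up: push the hypothesis through $\pi$ to land in Proposition~\ref{prop:carpet-3}, then lift the carpet structure of $J(F_\lambda)$ back through the unbranched cover $\pi|_{J(g_\mu)}$. Your lifting step (simple connectivity of each Fatou component by Riemann--Hurwitz, then Jordan boundaries from the fact that a connected finite cover of $\partial U\cong S^1$ is again $S^1$, then disjointness of closures via the local-homeomorphism property of $\pi$ on $J(g_\mu)$ combined with local connectivity of the Jordan boundaries downstairs) is correct and is genuinely needed; note in passing that condition~(c) of Theorem~\ref{thm:main} fails for $g_\mu$ itself (each component of $g_\mu^{-1}(T_\mu)$ has degree~$1$), so a direct appeal to Theorem~\ref{thm:main} for $g_\mu$ would not work.

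For part~(b) you depart from the paper's stated attribution to Proposition~\ref{prop:carpet-3} and instead verify Theorem~\ref{thm:main} directly for $h_\nu$. This is the right choice: $h_\nu$ is not conjugate to any $F_\lambda$ in general (the degree-$2$ critical point $3/2$ already lands on the degree-$3$ critical point $0$, whereas $z_\lambda$ in $F_\lambda$ is genuinely free), so Proposition~\ref{prop:carpet-3} does not apply literally. Your verification of condition~(c) via $h_\nu(z)-\nu=4\nu z^3/(27(1-z))$, forcing $h_\nu^{-1}(T_\nu)$ to be the single component $W_0\ni 0$ with $\deg(h_\nu|_{W_0})=3>2$, is exactly what is needed, and the Riemann--Hurwitz induction for connectivity goes through as you indicate.
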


Let us move to a bit complicated example. Set $n=2$, $d_0=4$, $d_\infty=2$ and $d_1=d_2=3$ in \eqref{eq:family}. We consider the following one-dimensional family of rational maps
\begin{equation*}
G_c(z)=\frac{a(z-1)^3(z-b)^3}{z^4}, \text{ where }
a=\frac{c(c-4)^3}{27(c-1)^6},~
b=\frac{c(1+2c)}{4-c}
\end{equation*}
and $c\in\C\setminus\{0,1,4,-\tfrac{1}{2}\}$ is the parameter. A direct calculation shows that $G_c$ has the critical orbits $b\overset{(3)}{\longmapsto} 0\overset{(4)}{\longmapsto}\infty\overset{(2)}{\longmapsto}\infty$, $c\overset{(2)}{\longmapsto} 1\overset{(3)}{\longmapsto} 0$ and a free critical point $z_c=\frac{2(1+2c)}{c-4}$ with local degree $2$. As another application of Corollary \ref{cor:carpet-1}, we have

\begin{prop}\label{prop:carpet-2}
The Julia set of $G_c$ is a Sierpi\'{n}ski carpet if $z_c\not\in U$ but $G_c^{\circ k}(z_c)\in U$ for some $k\geq 1$, where $U$ is the immediate super-attracting basin of $G_c$ at $\infty$.
\end{prop}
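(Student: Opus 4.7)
The plan is to verify the two hypotheses of Corollary \ref{cor:carpet-1} for the map $G_c$. Note that $G_c$ fits the family \eqref{eq:family} with $n=2$, $d_0=4$, $d_1=d_2=3$ and $d_\infty=2$, so the requirement $d_i\geq d_\infty+1=3$ is met; hence Corollary \ref{cor:carpet-1} applies as soon as its two hypotheses are in place.

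For the escape condition, the critical set of $G_c$ consists of $\infty$, $0$, $1$, $b$ (read off \eqref{eq:family}) together with the two free critical points $c$ and $z_c$. The orbits listed just before the proposition show that $\infty$, $0$, $1$, $b$ and $c$ all reach $\infty$ in at most four iterates, while $z_c$ is sent into $U$ by some $G_c^{\circ k}$ by hypothesis; thus every critical orbit meets $U$.

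For connectivity of $J(G_c)$, I would argue via Riemann--Hurwitz that every Fatou component is simply connected, which is equivalent to $J(G_c)$ being connected (since $G_c$ is hyperbolic and every Fatou component is preperiodic to $U$). The component $U$ is a disk ($\deg(G_c|_U)=2$, critical content $1$). Condition (b) of Theorem \ref{thm:main} gives $G_c^{-1}(U)=U\cup V$ with $\deg(G_c|_V)=4$, and the unique critical point $0$ in $V$ has ramification $3=\deg-1$, so $V$ is a disk. The remark after Theorem \ref{thm:main} (applied with $d_1=d_2=3$ and $d_\infty=2$) rules out the case of $1$ and $b$ lying in a single component of $G_c^{-1}(V)$ on Euler-characteristic grounds, so they sit in distinct disks $W_1,W_2$, each of degree $3$. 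Continuing inductively along the orbits of $c$ and $z_c$, every further Fatou component is simply connected by the same degree--ramification matching.

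The main obstacle is ensuring $z_c$ does not land inside any of the already critically-loaded components $V$, $W_1$, $W_2$, or the Fatou component containing $c$: if it did, the extra unit of ramification from $z_c$ would drop that component's Euler characteristic to $0$, producing an annular Fatou component and disconnecting $J(G_c)$. This is precisely the role of the hypothesis $z_c\notin U$ combined with $G_c^{\circ k}(z_c)\in U$ for some $k\geq 1$: it is designed to place $z_c$ in a genuinely new Fatou component whose critical content is exactly $1$ and whose image chain enters $U$ only at step $k$. Once this is established, both hypotheses of Corollary \ref{cor:carpet-1} hold and $J(G_c)$ is a Sierpi\'{n}ski carpet.
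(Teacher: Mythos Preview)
Your overall strategy---verify the hypotheses of Corollary~\ref{cor:carpet-1}, with the main work being connectivity of $J(G_c)$ via Riemann--Hurwitz on successive preimage components---matches the paper's. But two steps in your argument are wrong, not merely sketchy.

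First, you claim that the remark following Corollary~\ref{cor:carpet-1} ``rules out the case of $1$ and $b$ lying in a single component of $G_c^{-1}(V)$ on Euler-characteristic grounds.'' It says the opposite: the $b_i$'s need \emph{not} lie in distinct Fatou components, and condition~(c) of Theorem~\ref{thm:main} is still satisfied when they coincide. Indeed for $c=-2$ one has $b=1$, so the two zeros literally agree; and even when $b\neq 1$ they may share a component. The paper's proof accordingly splits into Case~I ($W_1=W_b$) and Case~II ($W_1\neq W_b$) and treats each separately.

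Second, your final paragraph asserts that the hypothesis places $z_c$ in a ``genuinely new'' Fatou component disjoint from $V$, $W_1$, $W_b$, and the component $Z_c$ of $c$. This is false. In Case~I with $b\neq 1$, the map $G_c:W_1\to V$ has degree $6$ while $1$ and $b$ together contribute only $4$ to the ramification; since $c\notin W_1$ (as $G_c(c)=1\in W_1\neq V=G_c(W_1)$), Riemann--Hurwitz \emph{forces} $z_c\in W_1$, and $W_1$ is then a disk with $\chi(W_1)=6-5=1$. In Case~II one must still allow $z_c\in Z_c$, and the paper checks directly that $Z_c$ remains simply connected in that sub-case too. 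So the obstacle you name is real, but its resolution is not ``$z_c$ avoids those components''; rather, one must verify case by case that whichever component absorbs $z_c$ carries enough degree to stay simply connected. Your proposal stops short of that case analysis, and the blanket claim it substitutes is incorrect.
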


In particular, if $G_c^{\circ k}(z_c)=0$ for some $k\geq 1$, then $G_c$ is post-critically finite and the Julia set of $G_c$ is connected and hence a Sierpi\'{n}ski carpet by Corollary \ref{cor:carpet-1}. See Figure \ref{Fig_Julia-carpet} for the parameter plane of $G_c$ and a carefully chosen Julia set.

\begin{figure}[!htpb]
  \setlength{\unitlength}{1mm}
  \includegraphics[height=58mm]{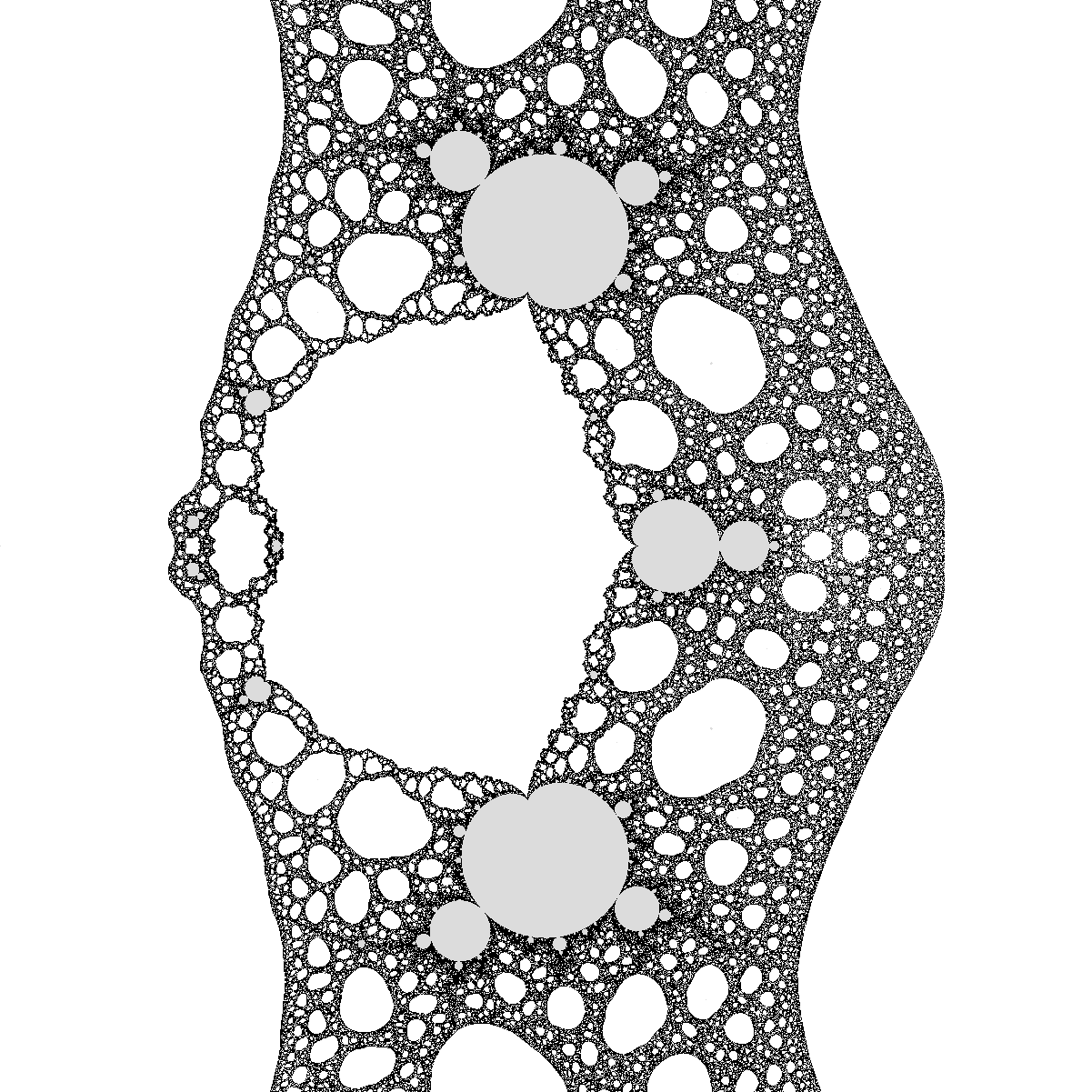}\quad
  \includegraphics[height=58mm]{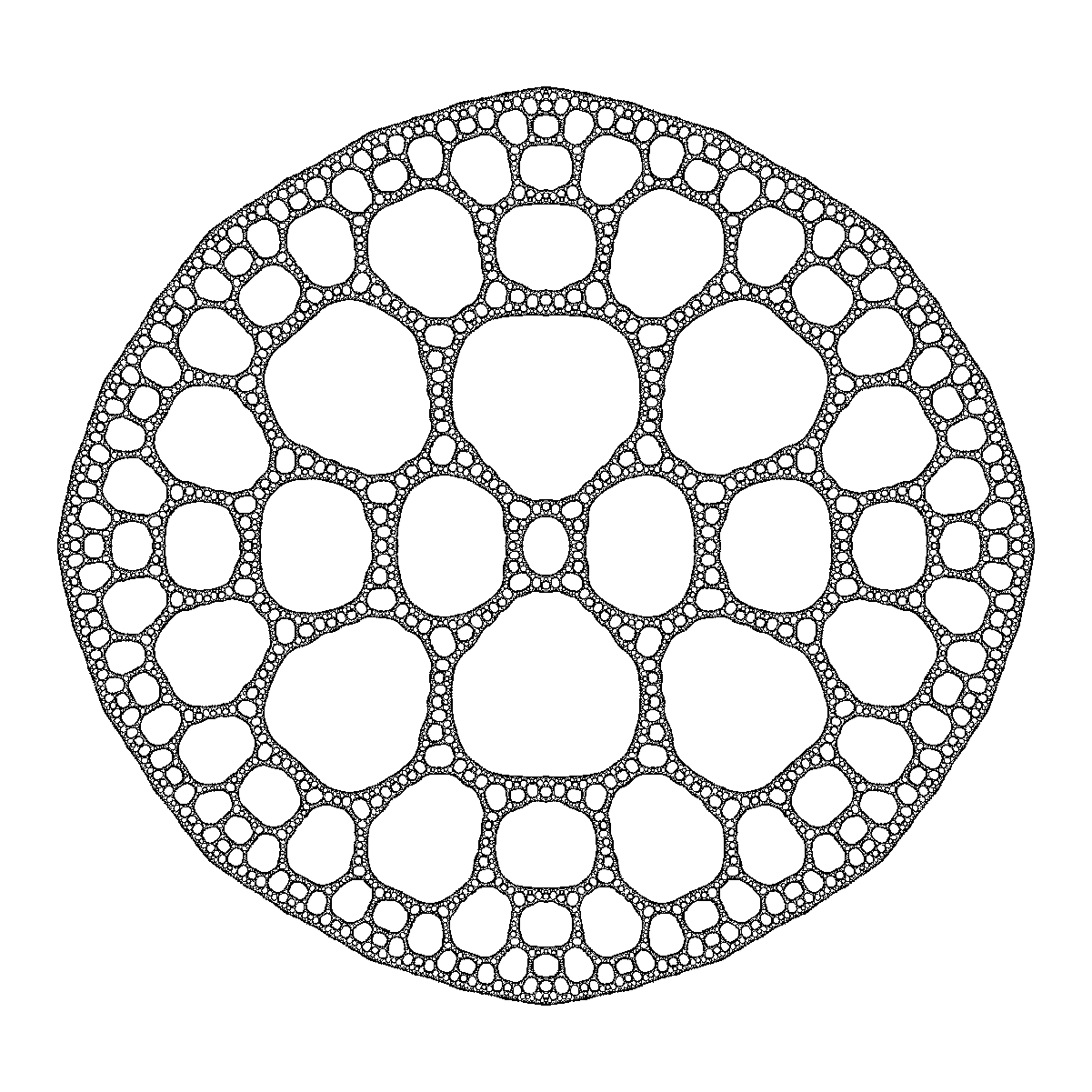}
  \caption{Left: The parameter plane of $G_c$, where almost all of the white ``holes" correspond to the parameters such that the corresponding Julia sets are Sierpi\'{n}ski carpets (The exceptional cases are the hyperbolic components containing $1$ and $-1/2$ where the parameters correspond to the disconnected Julia sets). Right: The Julia set of $G_{c_0}$, where $c_0=\ii\sqrt{2}$ is chosen such that $G_{c_0}(z_{c_0})=1$ and the Julia set of $G_{c_0}$ is a Sierpi\'{n}ski carpet.}
  \label{Fig_Julia-carpet}
\end{figure}

Note that our criterion in Theorem \ref{thm:main} works only for the rational maps with exactly one fixed Fatou component. Actually, there are many carpet Julia sets whose corresponding Fatou sets have periodic Fatou components with period greater than one. We refer the reader to \cite{DFGJ14} for a comprehensive study on the existence of carpet Julia sets in the quadratic rational maps. However, as far as we know, all the carpet Julia sets constructed before correspond to the rational maps whose periodic Fatou components have period one or period at least \textit{three}. Indeed, in \cite{DFGJ14} the authors deal with the quadratic rational maps but the period is always bigger than or equal to 3. On the other hand, it has been proved in \cite{AY09} that there are no carpet Julia sets for quadratic rational maps with period 2 Fatou components (and of course there are none with period 1 since they are conjugated to polynomials). Hence a natural question is: Does there exist a carpet Julia set of higher degree rational map whose corresponding Fatou set contains a Fatou component with period $2$? In this article we give an affirmative answer to this question.

\begin{thmx}\label{thm:peri-2}
There exists a family of cubic rational maps whose Julia sets are Sierpi\'{n}ski carpets and whose Fatou sets contain Fatou components of period $2$.
\end{thmx}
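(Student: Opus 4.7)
My plan is to construct the family explicitly and then verify the carpet property by adapting the argument of Theorem~\ref{thm:main} to the period-$2$ setting. I would start with cubic rational maps $f$ having a super-attracting $2$-cycle $\{0,\infty\}$: $f(0)=\infty$, $f(\infty)=0$, each point critical of local degree $2$. Riemann--Hurwitz then leaves room for one further free critical point, as well as one additional simple pole $s\neq 0$, and I would parametrize the family so that every free critical orbit is eventually mapped into the $2$-cycle. For such post-critically finite parameters, $f$ is hyperbolic and the Julia set is connected, with two period-$2$ immediate super-attracting Fatou components $U_0\ni\infty$ and $U_1\ni 0$.

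To verify the carpet property, I would apply the strategy of Theorem~\ref{thm:main} to the second iterate $g:=f^{\circ 2}$. The degree of $g$ is $9$ and both $U_0$ and $U_1$ are fixed Fatou components of $g$, so hypothesis (b) is literally violated; the proof scheme, however, adapts. One arranges the parameters so that $g^{-1}(U_0)=U_0\cup V$ and $g^{-1}(U_1)=U_1\cup V'$ with exactly one further component in each case (a combinatorial constraint on the placement of the simple pole $s$ and of the free critical point), verifies the analogue of hypothesis (d) for $g$, and then establishes the degree inequality $\deg(g|_V)>\deg(g|_{U_0})$, and analogously for $V'$, by placing the free critical orbit so as to concentrate ramification in $V$ and $V'$. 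With these analogues of (a)--(d) in hand for $g$, the proof scheme of Theorem~\ref{thm:main} runs first on the $U_0$-side and then symmetrically on the $U_1$-side to show that $\partial U_0$ and $\partial U_1$ are Jordan curves; a standard pull-back argument then extends the Jordan property and the pairwise disjointness of closures to all other Fatou components of $g$, hence of $f$.

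The main obstacle is twofold. First, one must identify a non-empty open set of parameters within the chosen family for which the combinatorial constraints (exactly one non-trivial pre-image component of $U_0$ and of $U_1$ under $g$, each satisfying the strict degree inequality) all simultaneously hold; this requires a careful placement of $s$ and of the free critical orbit and typically cuts out a specific hyperbolic component of the parameter space. Second, one must prove the critical disjointness $\overline{U_0}\cap\overline{U_1}=\emptyset$ in spite of the dynamical link $f(\overline{U_0})=\overline{U_1}$; this is the period-$2$ analogue of the Aspenberg--Yampolsky obstruction that forbids such carpets in the quadratic case, and it is precisely the point where the extra freedom provided by the cubic degree becomes essential. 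The complement-counting argument sketched in the discussion following Theorem~\ref{thm:main} is to be replayed with the reinforced degree inequality, yielding a contradiction to $\overline{U_0}\cap\overline{U_1}\neq\emptyset$ only when the free critical ramification is strong enough --- which is exactly the condition carved out by the parametric restriction in the first step.
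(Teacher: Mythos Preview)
Your route is genuinely different from the paper's. The paper does \emph{not} attempt to verify (analogues of) the hypotheses of Theorem~\ref{thm:main} for the second iterate. Instead it writes down an explicit one-parameter family
\[
h_\alpha(z)=\frac{3\alpha-1}{\alpha^2}\cdot\frac{z^2-3\alpha z+\alpha}{z^2(z-\beta)},\qquad \beta=3-\tfrac{1}{\alpha},
\]
with the super-attracting $2$-cycle $0\leftrightarrow\infty$ (note that only $0$ is critical in the cycle, not $\infty$), picks a post-critically finite parameter $\alpha_0=(1-\sqrt{33})/12$, and then proves the Jordan property for the immediate basin by a \emph{polynomial-like/renormalization} argument: a small disk $B$ about $0$ satisfies $\overline{B}\Subset A$ for the appropriate component $A$ of $\C\setminus h_{\alpha_0}^{\circ 2}(\partial B)$, so $(h_{\alpha_0}^{\circ 2},B',A)$ is degree-$2$ polynomial-like, and the Straightening Theorem gives that the basin $V\ni 0$ is a Jordan domain. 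The crucial disjointness $\overline{U}\cap\overline{V}=\emptyset$ (your ``Aspenberg--Yampolsky obstruction'') is obtained directly from the numerically verified geometric fact $\overline{B}\cap h_{\alpha_0}(\overline{B})=\emptyset$, not from any degree inequality. Only the final pairwise-disjointness step borrows the pull-back argument of Theorem~\ref{thm:main}.

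Your proposal, as it stands, has a real gap. You say you will ``verify the analogue of hypothesis (d) for $g$'', but for $g=f^{\circ 2}$ with $U=U_0$ this is impossible: the critical point $0\in U_1$ is fixed by $g$ and its $g$-orbit never meets $U_0$. In the proof of Theorem~\ref{thm:main}, condition (d) is exactly what forces every component of $\EC\setminus\overline{U}$ to be eventually mapped onto $A_V$; without it you cannot rule out a separate component of $\EC\setminus\overline{U_0}$ containing $U_1$, and the ``complement-counting argument'' you invoke does not yield $\overline{U_0}\cap\overline{U_1}=\emptyset$. Likewise, your combinatorial requirement $g^{-1}(U_0)=U_0\cup V$ is quite restrictive (for the paper's map $g^{-1}(U_0)$ already has several components), and you give no indication of a parameter where it holds together with the strict degree inequality. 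So the scheme needs either a different mechanism for the $\overline{U_0}\cap\overline{U_1}$ step, or the polynomial-like shortcut the paper actually uses.
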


In fact, the Julia set of a rational map could be a Sierpi\'{n}ski carpet if it contains some critical points. In particular, there exists infinitely renormalizable rational map whose Julia set is a Sierpi\'{n}ski carpet. For more details, see \cite{QYZ14} and \cite{QYY16}.

This article is organized as following: In \S\ref{sec-proof}, we give the proofs of Theorem \ref{thm:main}, Propositions \ref{prop:carpet-3} and \ref{prop:carpet-2}. Then some specific examples will be analysed and used to show the necessity of every assumption in Theorem \ref{thm:main}. In \S\ref{sec-special}, we will give some examples of special carpet Julia sets, such as a carpet Julia set with a parabolic fixed point and a carpet Julia set with a periodic Fatou component with period $2$ and hence prove Theorem \ref{thm:peri-2}.

\section{Proofs of Theorems and some examples}\label{sec-proof}

In this section, we first give the proofs of Theorem \ref{thm:main} and its applications Propositions \ref{prop:carpet-3} and \ref{prop:carpet-2}. Then we use some examples to show that all the conditions in Theorem \ref{thm:main} are necessary.

\subsection{Proofs of Theorems}

As stated in the introduction, in order to prove that a Julia set is a Sierpi\'{n}ski carpet, the main point is to show that all the Fatou components are bounded by pairwise disjoint simple closed curves (i.e. Jordan curves). For Theorem \ref{thm:main}, our strategy is to show first that the fixed Fatou component $U$ is bounded by a Jordan curve. By taking preimages, one can obtain that all the Fatou components are bounded by Jordan curves. Next we prove that $U$ and its preimage $V$ are bounded by \textit{disjoint} curves. The last step is to show that all the Fatou components are bounded by pairwise disjoint curves.

\begin{proof}[{Proof of Theorem \ref{thm:main}}]
Suppose that $f$ satisfies all the assumptions in Theorem \ref{thm:main}. By (b) and (d), we know that all the critical points are contained in the Fatou set of $f$, the Fatou component $U$ is attracting, super-attracing or parabolic and this is the unique periodic Fatou component of $f$. From \cite{TY96} and by (a), the Julia set of $f$ is connected and locally connected. By (c), we have the forward orbit of the Fatou components $W\to V\to U\to U$ under $f$. Without loss of generality, we assume that $\infty\in U$.

According to Torhorst's theorem \cite[p.\,106]{Why42}, the boundary of each Fatou component of $f$ is locally connected. In particular, $U$ is simply connected and $\partial U$ is locally connected. We claim that $U$ is a Jordan domain, i.e. $U$ is bounded by a simple closed curve. According to \cite[Proposition 2.5]{Pil96}, each component of $\EC\setminus\overline{U}$ is bounded by a Jordan curve. Let $A_V$ and $A_W$ be the connected components of $\EC\setminus\overline{U}$ containing $V$ and $W$ respectively, where $W$ is a connected component of $f^{-1}(V)$. We claim that $A_V=A_W$. Assuming the contrary that $A_V\neq A_W$, we have $f^{-1}(U)=U\cup V\subset U\cup A_V$ by (b) and hence $f(A_W)\subset\EC\setminus\overline{U}$. Since $A_W$ is a Jordan domain, $f(A_W)$ is connected and $f(\partial A_W)\subset\partial U$, it follows that $f(A_W)$ is exactly a component of $\EC\setminus\overline{U}$. This means that $f(A_W)=A_V$ since $f(W)=V$. Then $\deg(f|_W:W\to V)\leq \deg(f|_{\partial A_W}:\partial A_W\to \partial A_V)\leq \deg(f|_{\partial U}:\partial U\to \partial U)= \deg(f|_U:U\to U)$. However, by the condition (c) we have $\deg(f|_W:W\to V)>\deg(f|_U:U\to U)$. This is a contradiction and hence we have $A_V=A_W$ and both $V$ and $W$ are contained in $A_V$. In particular, $f^{-1}(V)$ is contained in $A_V$. Now it is easy to see that $\EC\setminus\overline{U}$ consists of exactly one connected component $A_V$. Indeed, if $\EC\setminus\overline{U}$ contains a component $A'\neq A_V$, then by (d) there exists an integer $k\geq 1$ such that $f^{\circ k}(A')=A_V$. This means that each point in $V$ would have more than $\deg(f)$ preimages, which is a contradiction. Although $\EC\setminus\overline{U}=A_V$ is a Jordan domain, it is still not enough to conclude that $\partial U$ is a Jordan curve. We need to exclude the existence of the ``hairs" attaching on the boundary of $A_V$.

Since $\partial U$ is locally connected and $\partial A_V\subset \partial U$, it means that each $z\in\partial A_V$ is accessible from $U$. There exist two simple (open) arcs $\gamma_1,\gamma_2\subset U$ landing at $z$ such that $\gamma_1$ and $\gamma_2$ connect $z$ with $\infty$ and $\gamma_1\cap \gamma_2=\emptyset$. Then $\Gamma:=\gamma_1\cup\gamma_2\cup\{z,\infty\}$ divides $\EC$ into two Jordan domains. Let $A$ be the connected component of $\EC\setminus\Gamma$ such that $A\cap \overline{A}_V=\emptyset$. Then $\{f^{\circ n}:n\in\N\}$ is a normal family in $A$ since\footnote{Note that $A\cap F(f)$ is contained in $U$, where $F(f)$ is the Fatou set of $f$.} each $f^{\circ n}(A)$ does not take the values in $V$. This means that $A$ is contained in the Fatou set. By the arbitrariness of $z$ and $\Gamma$, it means that $\partial U=\partial A_V$ and $U$ is a Jordan domain.

Since all the critical points are contained in the Fatou set of $f$, according to \cite[Proposition 2.8]{Pil96}, each preimage of $U$ is a Jordan domain. By induction, we know that all the Fatou components of $f$ are Jordan domains.

It remains to prove that each pair of the boundaries of the Fatou components are disjoint. To see this, suppose that $z\in\partial U\cap \partial V$. Then there exists a simple arc $\beta\subset U$ without touching the critical values of $f$ (they are finite in number) and landing at $f(z)$. Since $f^{-1}(U)=U\cup V$, it follows that $f^{-1}(\beta)$ has two connected components $\beta_1\subset U$ and $\beta_2\subset V$ and both of them land at $z$. This means that $z$ is a critical point of $f$. From the assumption (d), this is a contradiction. Hence $\partial U\cap \partial V=\emptyset$.

Let $\alpha_1$ and $\alpha_2$, respectively, be the connected components of $f^{-k}(\partial U)$ and $f^{-\ell}(\partial U)$ satisfying $\alpha_1\neq\alpha_2$, where $k,\ell\in\N$. We claim that $\alpha_1\cap\alpha_2=\emptyset$. Otherwise, by iterating $\alpha_1$ and $\alpha_2$ forward in several times, we conclude that $\partial U\cap\partial V\neq\emptyset$ if $k\neq \ell$, and $\partial U$ contains a point in the critical orbit of $f$ if $k=\ell$. However, neither these two cases are possible. Therefore, the boundaries of the Fatou components are pairwise disjoint. This completes the proof that the Julia set of $f$ is a Sierpi\'{n}ski carpet.
\end{proof}

Note that Propositions \ref{prop:carpet-3} and \ref{prop:carpet-2} are applications of Corollary \ref{cor:carpet-1}. From the assumptions in Propositions \ref{prop:carpet-3} and \ref{prop:carpet-2}, it is sufficient to prove that the Julia sets of $F_\lambda$ and $G_c$ are connected.

\begin{proof}[{Proof of Proposition \ref{prop:carpet-3}}]
Note that $F_\lambda$ has a super-attracting basin $U$ centered at the infinity and it has the critical orbit $1\mapsto 0\mapsto\infty\mapsto\infty$ and a free critical point $z_\lambda=-\frac{d_0}{d_\infty}$ with local degree $2$. Let $V$ be the Fatou component of $F_\lambda$ containing $0$. Suppose that $F_\lambda^{\circ k}(z_\lambda)\in V$ for some $k\geq 2$ and $z_\lambda\not\in V$. We claim first that $U\neq V$. If $U=V$, then $F_\lambda^{-1}(U)=U$ and $U$ is completely invariant. From the assumption $F_\lambda^{\circ k}(z_\lambda)\in V=U$ for some $k\geq 2$, we have $z_\lambda\in V$, which contradicts the assumption that $z_\lambda\not\in V$. This proves that claim that $U\neq V$.

Since $z_\lambda\not\in U$, it means that $U$ is a super-attracting basin containing exactly one critical point $\infty$. We claim that $U$ is simply connected. Indeed, let $U_0$ be a small simply connected neighborhood of $\infty$ such that $F_\lambda(\overline{U}_0)\subset U_0$. We use $U_1$ to denote the component of $F_\lambda^{-1}(U_0)$ containing $U_0$. Inductively, let $U_n$ be the component of $F_\lambda^{-n}(U_0)$ containing $U_0$ for all $n\geq 1$. Since $F_\lambda:U_{n+1}\setminus\{0\}\to U_n\setminus\{0\}$ is a covering map for all $n\in\N$, it follows that each $U_n$ is simply connected. Note that $U_0\subset U_1\subset U_2\subset\cdots$ and $U=\bigcup_{n\in\N} U_n$. This means that $U$ is simply connected.

We now prove that all the $m$-th preimages of $U$ are simply connected, where $m\in\N$. Since $z_\lambda\not\in V$, the map $F_\lambda:V\setminus\{0\}\to U\setminus\{\infty\}$ is a covering map. Therefore, $V\setminus\{0\}$ is doubly connected and $V$ is simply connected. Let $W$ be the Fatou component of $F_\lambda$ containing $1$. From the assumption that $F_\lambda^{\circ k}(z_\lambda)\in V$ for some $k\geq 2$, we know that $z_\lambda\not\in W$ and $F_\lambda:W\setminus\{1\}\to V\setminus\{0\}$ is also a covering map. Therefore, $W$ is simply connected also. In summary, we have $z_\lambda\not\in U\cup V\cup W$ and $U$, $V$, $W$ are all simply connected. Applying Riemann-Hurwitz's formula inductively, it is easy to see all the $m$-th preimages of $W$ are simply connected, where $m\in\N$. Hence all the Fatou components of $F_\lambda$ are simply connected and $J(F_\lambda)$ is connected. By Corollary \ref{cor:carpet-1}, $J(F_\lambda)$ is a Sierpi\'{n}ski carpet.
\end{proof}

The proof of Proposition \ref{prop:carpet-2} is similar to that of Proposition \ref{prop:carpet-3} although the argument is slightly complicated. For the completeness, we include a proof here.

\begin{proof}[{Proof of Proposition \ref{prop:carpet-2}}]
Let $U$ be the super-attracting basin of $G_c$ centered at the infinity. Recall that $G_c$ has the critical orbits $b\mapsto 0\mapsto\infty\mapsto\infty$ and $c\mapsto 1\mapsto 0$. Suppose that $G_c^{\circ k}(z_c)\in U$ for some $k\geq 1$ and $z_c\not\in U$, where $z_c=\frac{2(1+2c)}{c-4}$ is the unique free critical point of $G_c$. Let $V$ be the Fatou component of $G_c$ containing $0$. Similar to the proof of Proposition \ref{prop:carpet-3}, we have $U\neq V$ and $U$ is simply connected.

We now prove that all the $m$-th preimages of $U$ are simply connected, where $m\in\N$. Applying Riemann-Hurwitz's formula to $G_c:V\setminus\{0\}\to U\setminus\{\infty\}$, we know that $V\setminus\{0\}$ is doubly connected and $z_c\not\in V$. Therefore, $V$ is simply connected. Let $W_1$ and $W_b$ be the Fatou components containing $1$ and $b$ respectively. There are following two cases:

Case I: $W_1=W_b$. Then we have $G_c:W_1\setminus\{1,b\}=W_1\setminus G_c^{-1}(0)\to V\setminus\{0\}$. If $b\neq 1$ (i.e. $c\neq -2$), then the Euler's characteristic $\chi(W_1\setminus\{1,b\})\leq -1$ since $W_1\neq \EC$. By Riemann-Hurwitz's formula, $W_1$ is simply connected and $z_c\in W_1$. If $b=1$ (i.e. $c=-2$), then $z_c=1$ and we have $G_c(W_1\setminus\{1\})=V\setminus\{0\}$ and $\chi(W_1\setminus\{1\})\leq 0$. By Riemann-Hurwitz's formula, $W_1$ is simply connected also. Inductively, in both cases, all the $m$-th preimages of $W_1$ are simply connected, where $m\in\N$. Hence all the Fatou components are simply connected and $J(G_c)$ is connected.

Case II: $W_1\neq W_b$. Then we have two different maps $G_c:W_1\setminus\{1\}\to V\setminus\{0\}$ and $G_c:W_b\setminus\{b\}\to V\setminus\{0\}$. This means that $W_1$ and $W_b$ are simply connected, $z_c\not\in W_1\cup W_b$ and the restrictions of $G_c$ on $W_1\setminus\{1\}$ and $W_b\setminus\{b\}$ are both covering maps. Since $G_c^{\circ k}(z_c)\in U$, we have $G_c^{\circ (k-2)}(z_c)\in W_1\cup W_b$ and $k\geq 3$. Let $Z_c$ be the Fatou component containing $c$. If $z_c\not\in Z_c$, then it is easy to see that all the $m$-th preimages of $W_1$ and $W_b$ are simply connected, where $m\in\N$. Hence $J(G_c)$ is connected. If $z_c\in Z_c$, then $G_c(z_c)\in W_1$. We have $G_c:Z_c\setminus G_c^{-1}(1)\to W_1\setminus\{1\}$. Note that $z_c\in Z_c\neq \EC$ and $c\in G_c^{-1}(1)$. By Riemann-Hurwitz's formula, it means that the Euler's characteristic $\chi(Z_c\setminus G_c^{-1}(1))= -1$ and $z_c\in Z_c\setminus G_c^{-1}(1)$ or $\chi(Z_c\setminus G_c^{-1}(1))= 0$ and $z_c=c$. In both cases, $Z_c$ is simply connected. Inductively, all the $m$-th preimages of $W_b$ and $W_1$ are simply connected, where $m\in\N$. Hence all the Fatou components are simply connected and $J(G_c)$ is connected.
\end{proof}

\subsection{The counter-examples}

In this subsection, in order to verify that each of the assumptions in Theorem \ref{thm:main} is necessary, we give some examples to show that the corresponding Julia sets could not be Sierpi\'{n}ski carpets if only three of four conditions in Theorem \ref{thm:main} are satisfied.

\begin{exam}\label{exam-1}
We show that there exists a family of rational maps $f_1$ satisfying the conditions (b) (c) (d) in Theorem \ref{thm:main} but $J(f_1)$ is not a Sierpi\'{n}ski carpet. Consider McMullen maps \begin{equation*}
f_1(z)=z^{d_\infty}+\lambda_1/z^{d_0},
\end{equation*}
where $1/d_0+1/d_\infty<1$ and $\lambda_1\in\C\setminus\{0\}$. According to \cite{DLU05}, if $\lambda_1\neq 0$ is small enough, then there exist three different Fatou components $U$, $V$ and $W$ such that $f_1(U)=U$, $f_1^{-1}(U)=U\cup V$ and $f_1^{-1}(V)=W$, where $U$ is the super-attracting basin centered at $\infty$ and $V\neq U$ is the Fatou component containing the origin. However, the Julia set of $f_1$ is a Cantor set of circles, which is not connected and hence not a Sierpi\'{n}ski carpet (see Figure \ref{Fig_para-carpet-1}, where $d_0=d_\infty=3$ and $\lambda_1=10^{-2}$).
\end{exam}

\begin{exam}\label{exam-2}
We now construct a rational map $f_2$ satisfying the conditions (a) (c) (d) in Theorem \ref{thm:main} but $J(f_2)$ is not a Sierpi\'{n}ski carpet. Let
\begin{equation*}
f_2(z)=-\frac{3(3z-4)}{2z(2z-3)^2}.
\end{equation*}
The critical orbit of $f_2$ is $1\overset{(3)}{\longmapsto} \tfrac{3}{2}\overset{(2)}{\longmapsto}\infty\overset{(2)}{\longmapsto}0\overset{(1)}{\longmapsto}\infty$. Since $f_2$ is post-critically finite, the Julia set of $f_2$ is connected. Let $U$, $V$ and $W$ be the Fatou components containing $\infty$, $\tfrac{3}{2}$ and $1$ respectively. It is easy to see that $f_2$ satisfies all the conditions in Theorem \ref{thm:main} except the second. By a straightforward calculation, one can verify that the restriction of $f_2$ on $(-\infty,0)$ is strictly decreasing and it is a homeomorphism to itself. Therefore, there exists a unique point $\xi\in(-\infty,0)$ such that $f_2(\xi)=\xi$ which is repelling. This means that $(-\infty,\xi)\subset U$ and $(\xi,0)\subset Z$, where $Z$ is the Fatou component containing $0$. Hence $J(f_2)$ is not a Sierpi\'{n}ski carpet since $\xi\in \overline{U}\cap \overline{Z}$ (see Figure \ref{Fig_para-carpet-1}).
\end{exam}

\begin{figure}[!htpb]
  \setlength{\unitlength}{1mm}
  \includegraphics[height=55mm]{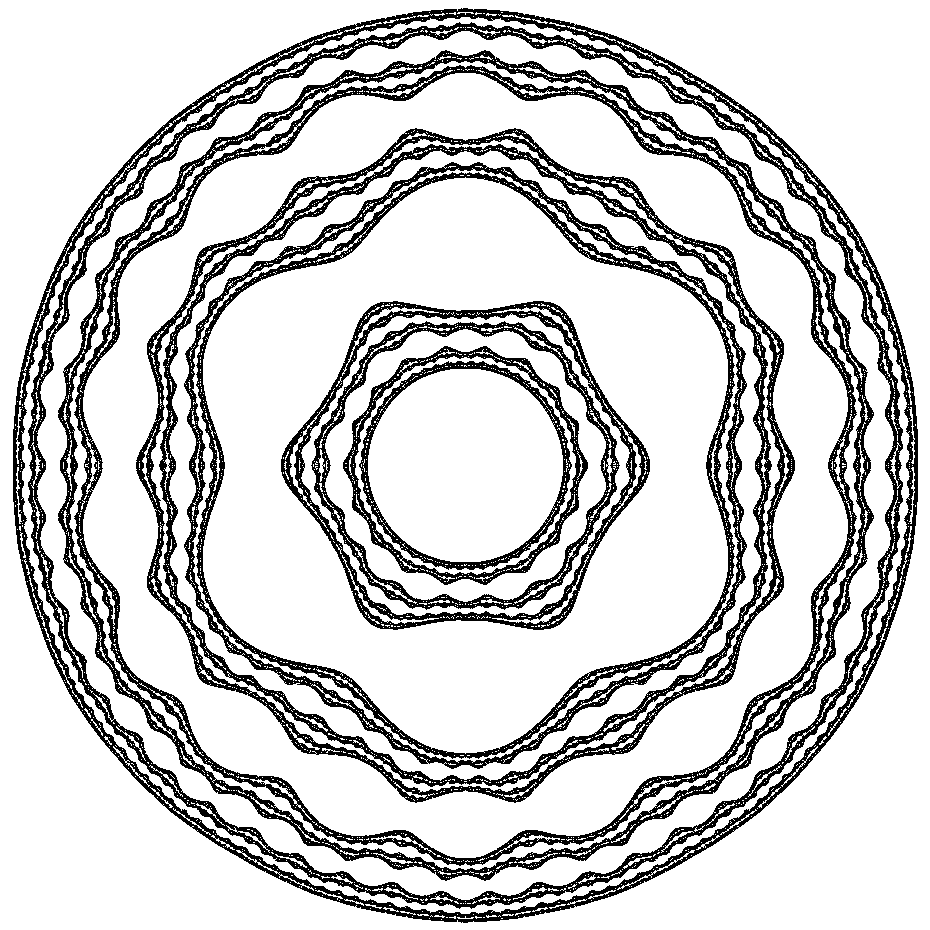}\quad
  \includegraphics[height=55mm]{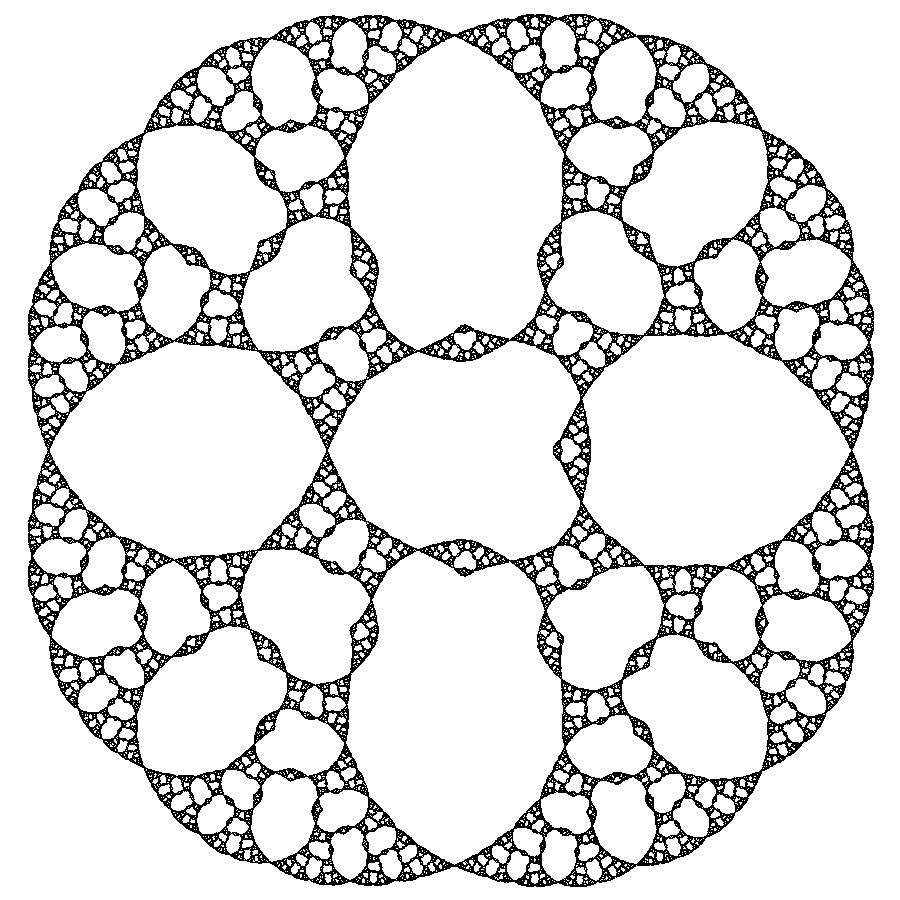}\vskip0.2cm
  \includegraphics[height=55mm]{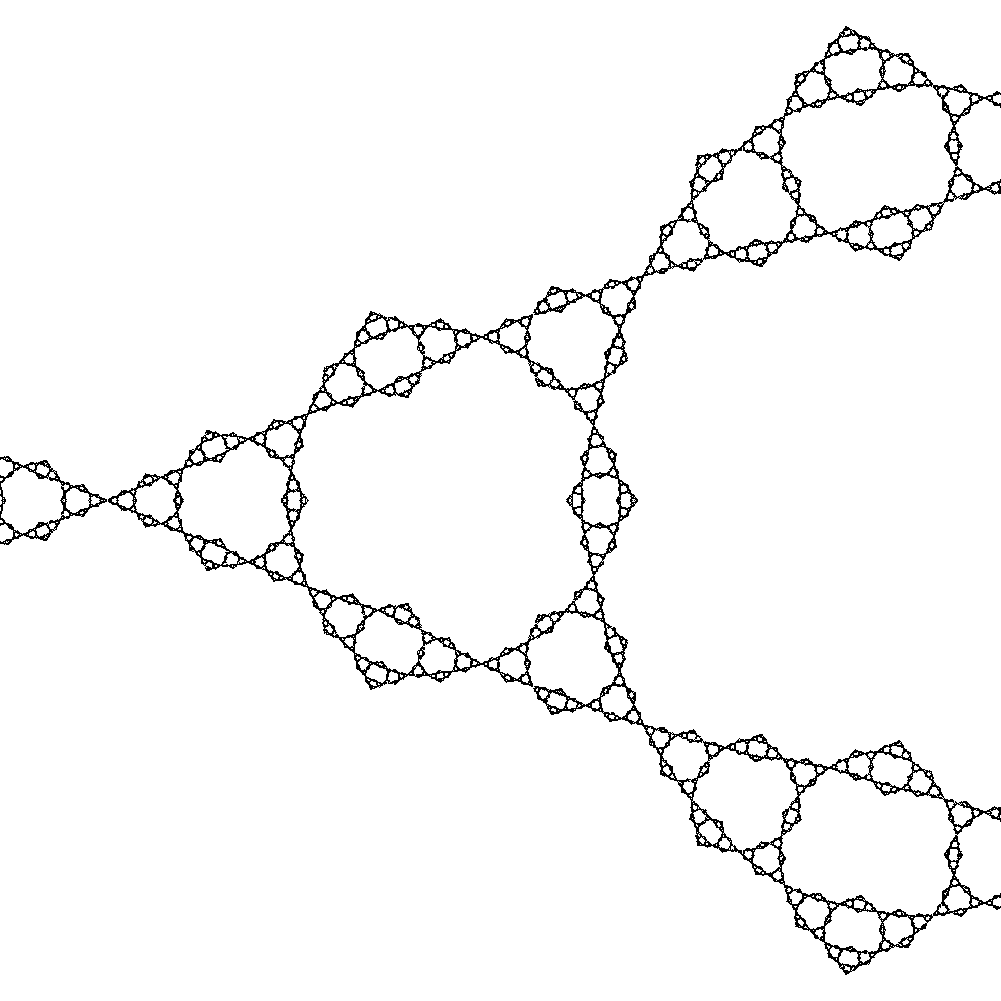}\quad
  \includegraphics[height=55mm]{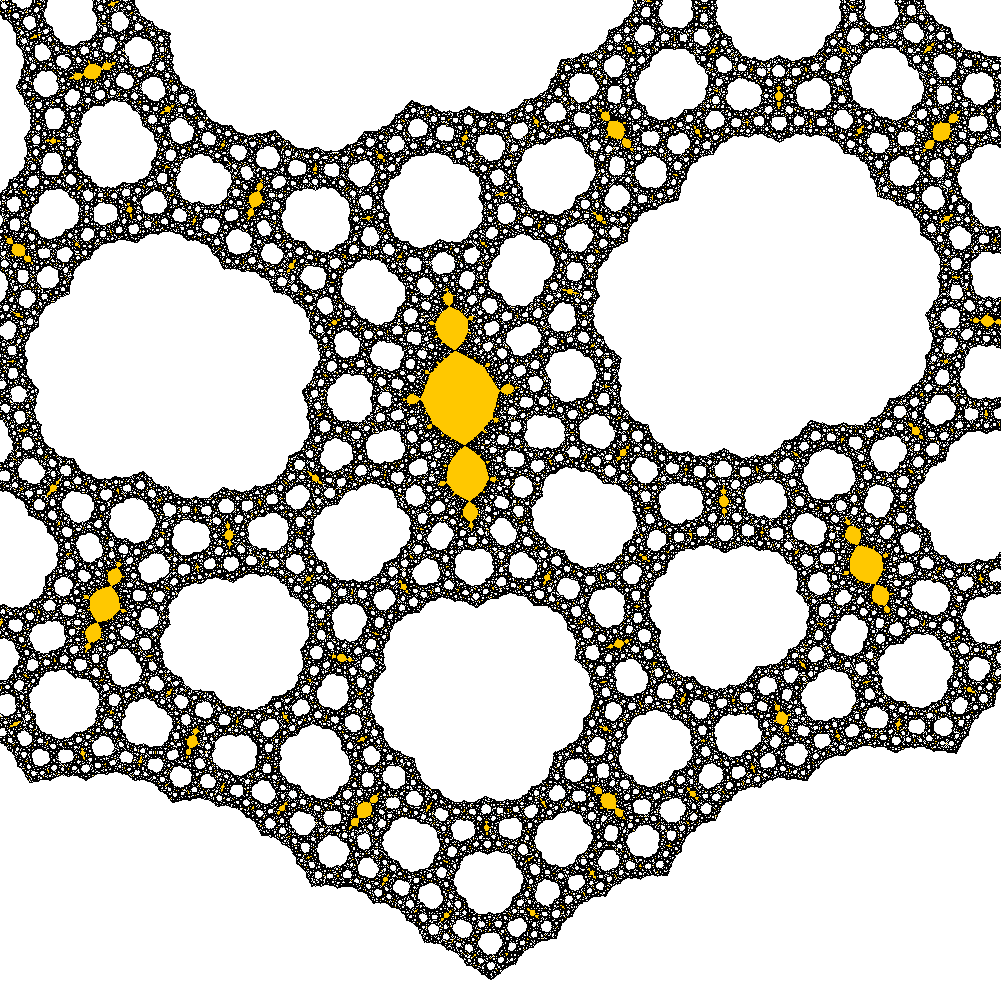}
  \caption{The Julia sets of $f_1$, $f_2$, $f_3$ and $f_4$. The first Julia set is not connected and the latter three Julia sets do not satisfy the property that all the boundaries of the Fatou components are pairwise disjoint. None of these Julia sets are Sierpi\'{n}ski carpets.}
  \label{Fig_para-carpet-1}
\end{figure}

\begin{exam}\label{exam-3}
We show that there exists a rational map $f_3$ satisfying the conditions (a) (b) (d) in Theorem \ref{thm:main} but $J(f_3)$ is not a Sierpi\'{n}ski carpet. Let
\begin{equation*}
f_3(z)=\frac{8(z-1)^2(z+8)}{27z}.
\end{equation*}
The critical orbit of $f_3$ is $1\overset{(2)}{\longmapsto} 0\overset{(1)}{\longmapsto}\infty\overset{(2)}{\longmapsto}\infty$ and $-2\overset{(3)}{\longmapsto}-8 \overset{(1)}{\longmapsto}0\overset{(1)}{\longmapsto}\infty$. The Julia set of $f_3$ is connected since it is post-critically finite. Let $U$, $V$ and $W$ be the Fatou components containing $\infty$, $0$ and $1$ respectively. We claim that $U\neq V$. If not, then $U$ is completely invariant and all the critical points are contained in $U$. This means that $J(f_3)$ is disconnected, which is a contradiction. Therefore, we have $U\neq V$, $f_3(U)=U$ and $f_3^{-1}(U)=U\cup V$.

A direct calculation shows that $f_3$ maps $(-\infty,0)$, $(0,1)$ and $(1,+\infty)$ onto $\R$, $(0,+\infty)$ and $(0,+\infty)$ respectively, and the restrictions of $f_3$ on these intervals are strictly monotonous. One can verify that there exist three repelling fixed points $\xi_1$, $\xi_2$ and $\xi_3$ of $f_3$ satisfying $-8<\xi_1<0<\xi_2<1<\xi_3<4$. Actually, $\xi_1=-5.57371629\cdots$ is a \textit{cut point} of the Julia set of $f_3$ and hence $J(f_3)$ is not a Sierpi\'{n}ski carpet. One can refer to \cite[Theorem 6.2]{FY15} for the details of the proof under the similar case (see Figure \ref{Fig_para-carpet-1}).
\end{exam}

\begin{exam}\label{exam-4}
We construct finally a rational map $f_4$ satisfying the conditions (a) (b) (c) in Theorem \ref{thm:main} but $J(f_4)$ is not a Sierpi\'{n}ski carpet. Consider the map
\begin{equation*}
f_4(z)=\lambda_4(z-1)^3/z,
\end{equation*}
where $\lambda_4\in\C\setminus\{0\}$. The map $f_4$ has the critical orbit $1\overset{(3)}{\longmapsto} 0\overset{(1)}{\longmapsto}\infty\overset{(2)}{\longmapsto}\infty$ and a free critical point $-\frac{1}{2}$ with local degree $2$. Let $U$, $V$ and $W$ be the Fatou components containing $\infty$, $0$ and $1$ respectively. Then it is easy to see $f_4^{-1}(U)=U\cup V$, $f_4^{-1}(V)=W$ and $\deg(f_4|_W:W\to V)=3>\deg(f_4|_U:U\to U)=2$.

If one chooses a special $\lambda_4$ such that the free critical point $-\frac{1}{2}$ is periodic under $f_4$ and further, such that $f_4$ is ``renormalizable", then the Julia set of $f_4$ could not be a Sierpi\'{n}ski carpet. Indeed, let $\lambda_4=0.31661929\cdots$ such that $f_4^{\circ 6}(-1/2)=-1/2$ and the Julia set of $f_4$ contains a copy of the ``basilica'' (i.e. the homeomorphic image of the Julia set of $z\mapsto z^2-1$). It is well known that $z\mapsto z^2-1$ has two bounded periodic Fatou components whose boundaries intersect at a point. This means that the Julia set of $J(f_4)$ is connected but not a Sierpi\'{n}ski carpet (see \cite[\S 6]{Ste06} and Figure \ref{Fig_para-carpet-1}).
\end{exam}

\section{Some special carpet Julia sets}\label{sec-special}

In this section, we construct some special carpet Julia sets.  The first one is a carpet Julia set with a parabolic fixed point and the second is a carpet Julia set whose corresponding Fatou set contains a Fatou component with period $2$.

\subsection{A parabolic carpet Julia set}\label{subsec-para}

Let
\begin{equation*}
g_\varrho(z)=\frac{b}{3}\cdot\frac{3z^2+3az-\varrho a}{(z-\varrho)^3},
\end{equation*}
where
\begin{equation*}
a=-\frac{3(2+\varrho)}{9-4\varrho+\varrho^2},\,b=\frac{(1-\varrho)^2 (9-4\varrho+\varrho^2)}{3-2\varrho} \text{ with } \varrho\neq 0,1,\frac{3}{2}
\end{equation*}
are chosen such that $g_\varrho(1)=1$ and $g_\varrho'(1)=1$, i.e. $g_\varrho$ has a parabolic fixed point at $1$ with multiplier $1$. A direct calculation shows that the critical orbits of $g_\varrho$ are
\begin{equation*}
\varrho\overset{(3)}{\longmapsto}\infty\overset{(1)}{\longmapsto}0\overset{(2)}{\longmapsto} \cdots, \text{ and }
z_\varrho:=-2\varrho-2a\overset{(2)}{\longmapsto}\cdots.
\end{equation*}

\begin{prop}\label{prop:parabolic-carpet}
The Julia set of $g_\varrho$ is a Sierpi\'{n}ski carpet if $z_\varrho\not\in W$ but $g_\varrho^{\circ k}(z_\varrho)\in W$ for some $k\geq 1$, where $W$ is the Fatou component containing $\varrho$. In particular, if $\varrho_0=18$, then the Julia set of $g_{\varrho_0}$ is a Sierpi\'{n}ski carpet.
\end{prop}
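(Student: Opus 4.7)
The plan is to apply Theorem~\ref{thm:main} to $g_\varrho$, following closely the template of Propositions~\ref{prop:carpet-3} and~\ref{prop:carpet-2} but with adjustments for the parabolic case. Let $U$ denote the immediate parabolic basin at the fixed point $1$. The preliminary step is to show that $0\in U$, i.e.\ that the critical orbit $\varrho\overset{(3)}{\mapsto}\infty\overset{(1)}{\mapsto}0$ enters the immediate basin exactly at the point $0$. Granted this, set $V$ to be the Fatou component containing $\infty$ (so $g_\varrho(V)=U$, since $g_\varrho(\infty)=0\in U$), and let $W$ be the Fatou component containing $\varrho$ (so $g_\varrho(W)=V$). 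This places $W\to V\to U\to U$ in exactly the configuration required by Theorem~\ref{thm:main}.

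Conditions (b) and (c) then follow from Riemann--Hurwitz and a degree count. The critical point $0$ of local degree $2$ contained in $U$ gives $\deg(g_\varrho|_U)=2$, while $V$ contains no critical point and so $\deg(g_\varrho|_V)=1$; since $\deg g_\varrho=3$, this forces $g_\varrho^{-1}(U)=U\cup V$, verifying (b). Similarly, $W$ contains the critical point $\varrho$ of local degree $3$, so $\deg(g_\varrho|_W)=3$; as this exhausts the global degree, $g_\varrho^{-1}(V)=W$ is a single component, and $\deg(g_\varrho|_W)=3>2=\deg(g_\varrho|_U)$ verifies (c). Condition (d) is immediate from the hypothesis: the $z_\varrho$-orbit enters $W$ after at most $k$ steps and then follows the $\varrho$-orbit into $U$. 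For the connectedness of $J(g_\varrho)$ needed in (a), I would argue by induction using Riemann--Hurwitz, exactly as in the proof of Proposition~\ref{prop:carpet-3}, that every Fatou component of $g_\varrho$ is simply connected. Theorem~\ref{thm:main} then yields that $J(g_\varrho)$ is a Sierpi\'{n}ski carpet. For the special case $\varrho_0=18$, a direct computation gives $a=-\tfrac{20}{87}$, $b=-\tfrac{25143}{11}$ and $z_{\varrho_0}=-\tfrac{3092}{87}$, and numerical iteration yields $g_{\varrho_0}(z_{\varrho_0})\approx 18.95$, which lies in the Fatou component of $\varrho_0=18$; this verifies the hypothesis with $k=1$.

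The main obstacle is the preliminary claim that $0\in U$. In the parabolic setting the basin consists of infinitely many Fatou components, and a priori the orbit of $0$ might first land in a proper preimage of the immediate basin before being absorbed. I plan to handle this by combining Fatou-coordinate estimates near the parabolic petal at $1$ with a topological argument: showing that $0$ and $g_\varrho(0)$ lie in a common Fatou component (for instance by verifying that an explicit real-analytic arc joining them stays in the Fatou set) forces that component to be forward-invariant, and hence equal to the immediate basin $U$. Once $0\in U$ is secured, the rest of the argument is a routine Riemann--Hurwitz bookkeeping following the model of Propositions~\ref{prop:carpet-3} and~\ref{prop:carpet-2}.
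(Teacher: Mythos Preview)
Your outline for the general statement matches the paper's, but the ``main obstacle'' you isolate has a one-line resolution that you are overlooking, and your proposed fix via Fatou coordinates and explicit arcs is unnecessary. The immediate parabolic basin $U$ must contain a critical point. If that critical point were $\varrho$, then forward invariance of $U$ and the orbit $\varrho\mapsto\infty\mapsto 0$ force $\varrho,\infty,0\in U$, hence $U=V=W$; since $g_\varrho^{-1}(\infty)=\{\varrho\}\subset U$ with multiplicity $3$, this makes $U$ completely invariant, and then $g_\varrho^{\circ k}(z_\varrho)\in W=U$ pulls $z_\varrho$ back into $W$, contradicting $z_\varrho\notin W$. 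The same contradiction arises if $z_\varrho\in U$ (then $g_\varrho^{\circ k}(z_\varrho)\in U$ forces $U=W$, hence $\varrho\in U$, and we are in the previous case). So the critical point in $U$ is $0$, by elimination. The paper asserts this without comment and then proceeds exactly as you describe: $U\neq V$, $V\neq W$, conditions (b)--(d), and the Riemann--Hurwitz induction for (a).

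For $\varrho_0=18$ your argument has a genuine gap, and here the paper takes a substantially different route. Knowing $g_{\varrho_0}(z_{\varrho_0})\approx 18.95$ is numerically close to $\varrho_0=18$ does \emph{not} place it in the Fatou component $W$; you have no a priori lower bound on the size of $W$, and you also have not checked $z_{\varrho_0}\notin W$. The paper does not attempt to verify the hypothesis of the first part for $\varrho_0=18$. Instead it shows $g_{\varrho_0}^{\circ 3}(z_{\varrho_0})\in(0,1)$ and $g_{\varrho_0}([0,1])\subset[1/2,1]$ monotonically, so every critical orbit converges to the parabolic point (this is condition (d)); it then constructs an explicit degree-$2$ polynomial-like restriction $(g_{\varrho_0},B',A)$ around the origin, hybrid-equivalent to $z\mapsto z^2+\tfrac14$, which shows the immediate basin is simply connected and hence $J(g_{\varrho_0})$ is connected (condition (a)). Conditions (b) and (c) follow because $U$ is trapped in the bounded region $A$, so $\infty$, $\varrho_0$ and $z_{\varrho_0}$ all lie outside $U$. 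Your numerical proximity argument cannot substitute for this polynomial-like construction.
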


\begin{proof}
The proof of the first statement is similar to that of Propositions \ref{prop:carpet-3} and \ref{prop:carpet-2}. Suppose that $z_\varrho\not\in W$ but $g_\varrho^{\circ k}(z_\varrho)\in W$ for some $k\geq 1$. Then $g_\varrho$ has a parabolic basin $U$ containing the critical point $0$ with the parabolic fixed point $1$ on its boundary. Let $V$ be the Fatou component containing $\infty$. We claim first that $U\neq V$. Suppose that $U=V$. Then $V=W$ since otherwise each $z\in U$ has at least $2$ preimages in $U$ and $3$ preimages in $W$ (counted with multiplicity) which is a contradiction. Since $g_\varrho^{-1}(\infty)=\varrho$, it means that $U$ is completely invariant under $g_\varrho$. From the assumption that $g_\varrho^{\circ k}(z_\varrho)\in W=U$ for some $k\geq 1$, we have $z_\varrho\in U$, which contradicts the assumption that $z_\varrho\not\in W$. This proves that claim that $U\neq V$. Therefore, we have $V\neq W$ and the critical points $\varrho$ and $z_\varrho$ are not contained in $U$. Since the immediate parabolic basin $U$ contains exactly one critical point $0$, it follows that $U$ is simply connected. By Riemann-Hurwitz's formula, all the preimages of $U$ are simply connected also. Hence $J(g_\varrho)$ is connected and it is a Sierpi\'{n}ski carpet by Theorem \ref{thm:main}.

If $\varrho_0=18$, we have
\begin{equation*}
g_{\varrho_0}(z)=-\frac{289\,(87z^2-20z+120)}{11\,(z-18)^3}.
\end{equation*}
A direct calculation shows that $g_{\varrho_0}^{\circ 3}(z_{\varrho_0})\in(0,1)$, where $z_{\varrho_0}=-3092/87$ is a critical point of $g_{\varrho_0}$. One can verify easily that $g_{\varrho_0}([0,1])\subset[1/2,1]$ and $g_{\varrho_0}$ is increasing strictly on $[0,1]$. This means that all the critical points are attracted by the parabolic basin of $1$. Therefore, in order to prove that $J(g_{\varrho_0})$ is a Sierpi\'{n}ski carpet, it is sufficient to prove that it is connected, or equivalently, to prove that the immediate parabolic basin of $1$ is simply connected. For this, we define an oval disk $B:=\{z=x+y\ii\in\C:(\frac{x+1.5}{4.5})^2+(\frac{y}{5.4})^2<1\}$ and let $A$ be the component of $\C\setminus g_{\varrho_0}(\partial B)$ containing the origin. One can check that $\overline{B}$ is compactly contained in $A$ (This requires some numerical calculations for which we omit the details here. Compare \cite[\S 6]{Ste06} and see the picture in Figure \ref{Fig-carpet-parabolic} on the left). Shrinking the domain $A$ a little if necessary, we assume that $A$ is bounded by a simple closed curve. Let $B'$ be the component of $g_{\varrho_0}^{-1}(A)$ containing $0$. Then $(g_{\varrho_0},B',A)$ is a polynomial-like mapping with degree $2$ (see the definition in \cite{DH85}). In particular, $g_{\varrho_0}$ contains a homeomorphic copy of the Julia set of $z\mapsto z^2+\tfrac{1}{4}$. This means that the immediate parabolic basin of $1$ is simply connected and the Julia set of $g_{\varrho_0}$ is a Sierpi\'{n}ski carpet.
\end{proof}

\begin{figure}[!htpb]
  \setlength{\unitlength}{1mm}
  \includegraphics[height=55mm]{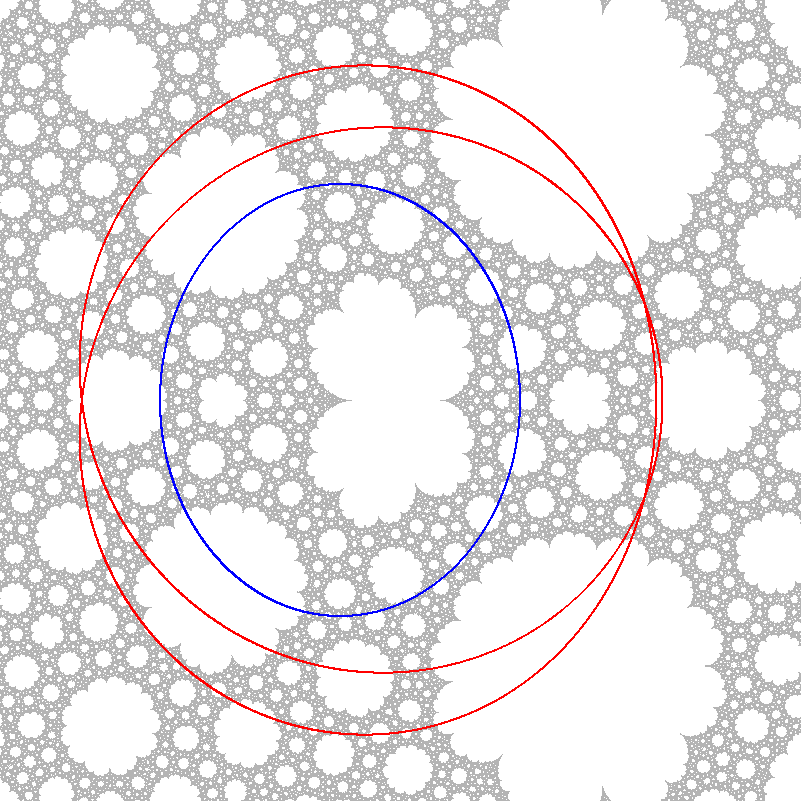}\quad
  \includegraphics[height=55mm]{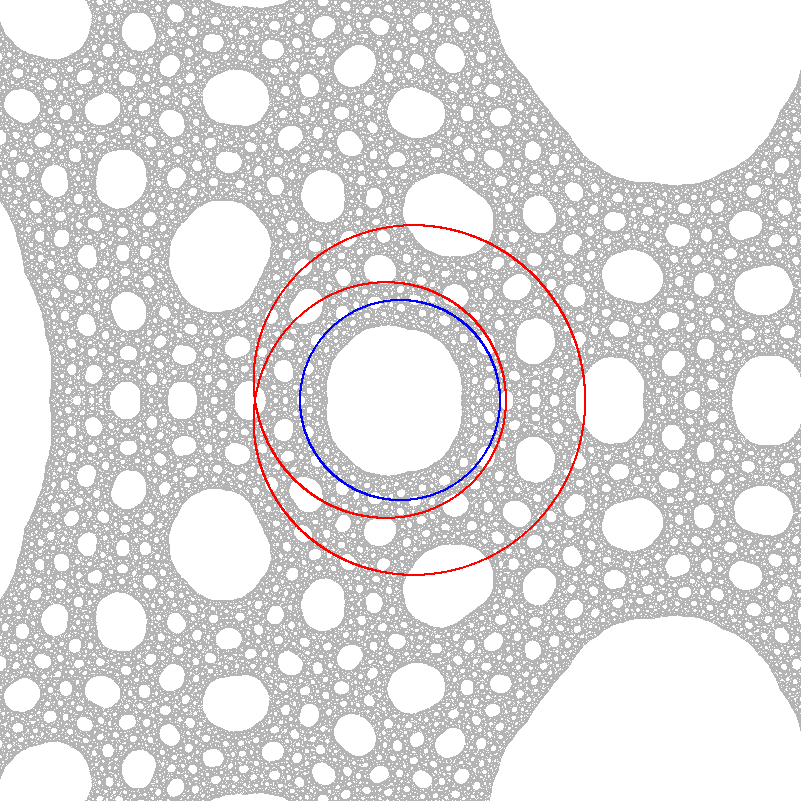}
  \caption{The Julia sets of $g_{\varrho_0}$ and $h_{\alpha_0}$, where $\varrho_0=18$ and $\alpha_0=(1-\sqrt{33})/12$ are chosen such that $g_{\varrho_0}$ has a parabolic fixed point and $h_{\alpha_0}$ has a Fatou component with period $2$. Both Julia sets are Sierpi\'{n}ski carpets. In order to show that some periodic Fatou components are simply connected, both pictures have been depicted the candidates of the domains and the ranges of the polynomial-like mappings.}
  \label{Fig-carpet-parabolic}
\end{figure}

\subsection{A carpet Julia set with Fatou components of period $2$}

In this subsection we construct a carpet Julia set of a cubic rational map with Fatou components of period $2$. Let

\begin{equation*}
h_\alpha(z)=\frac{3\alpha-1}{\alpha^2}\cdot\frac{z^2-3\alpha z+\alpha}{z^2(z-\beta)},
\text{ where }\beta=3-\frac{1}{\alpha} \text{ and }\alpha\neq 0,\frac{1}{3},\frac{1}{2}.
\end{equation*}
A straightforward calculation shows that the critical orbits of $h_\alpha$ are
\begin{equation*}
1\overset{(3)}{\longmapsto}\beta\overset{(1)}{\longmapsto}\infty\overset{(1)}{\longmapsto} 0\overset{(2)}{\longmapsto}\infty
\text{ and } z_\alpha:=6\alpha-2\overset{(2)}{\longmapsto}\cdots.
\end{equation*}
Therefore, $h_\alpha$ has a periodic super-attracting orbit with period $2$.

\begin{prop}
Let $\alpha_0=(1-\sqrt{33})/12=-0.39538022\cdots$ such that $h_{\alpha_0}(z_{\alpha_0})=1$. Then the Julia set of $h_{\alpha_0}$ is a Sierpi\'{n}ski carpet.
\end{prop}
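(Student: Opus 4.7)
My plan is to apply the proof of Theorem~\ref{thm:main} to the second iterate $f:=h_{\alpha_0}^{\circ 2}$, for which both $U_\infty$ and $U_0$ are fixed super-attracting Fatou components, in the same spirit as the parabolic example treated in Proposition~\ref{prop:parabolic-carpet}. The defining condition $h_{\alpha_0}(z_{\alpha_0})=1$ together with the critical orbit $1\mapsto\beta\mapsto\infty\mapsto 0\mapsto\infty$ shows that every critical point of $h_{\alpha_0}$ is eventually captured by the super-attracting $2$-cycle $\{0,\infty\}$, so $h_{\alpha_0}$ is hyperbolic (indeed post-critically finite).

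I would then show that $J(h_{\alpha_0})$ is connected by verifying that every Fatou component is simply connected. Applying the Riemann--Hurwitz formula to $h_{\alpha_0}|_{U_\infty}\colon U_\infty\to U_0$ (degree $1$, unramified) and to $h_{\alpha_0}|_{U_0}\colon U_0\to U_\infty$ (degree $2$, with the single critical point $0$ of local degree $2$) gives
\begin{equation*}
\chi(U_\infty)=\chi(U_0)=2\chi(U_\infty)-1,
\end{equation*}
forcing $\chi(U_\infty)=\chi(U_0)=1$. Pulling back through the Fatou components that meet the critical orbit ($W_\beta\ni\beta$, $W_1\ni 1$, and the component containing $z_{\alpha_0}$) and applying Riemann--Hurwitz at each step yields simple connectedness of every Fatou component, hence connectedness of $J(h_{\alpha_0})$ (and local connectedness by \cite{TY96}). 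As a parallel to Proposition~\ref{prop:parabolic-carpet}, one may alternatively corroborate the simple connectedness of $U_\infty$ by exhibiting an oval $B\ni\infty$ and the component $B'$ of $(h_{\alpha_0}^{\circ 2})^{-1}(A)$ containing $\infty$ making $(h_{\alpha_0}^{\circ 2},B',A)$ a polynomial-like mapping of degree $2$, hybrid conjugate to $z\mapsto z^2$, as sketched on the right-hand side of Figure~\ref{Fig-carpet-parabolic}.

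The last step is to transpose the Sierpi\'{n}ski-carpet argument of Theorem~\ref{thm:main} to the pair $(f,U_\infty)$. Strictly speaking Theorem~\ref{thm:main} does not apply verbatim: the preimage $f^{-1}(U_\infty)$ decomposes as $U_\infty\cup W_1\cup X$, where $W_1\ni 1$ and $X$ consists of the (one or two) components containing the non-$\infty$ preimages of $0$, and the critical point $0$ of $f$ stays inside $U_0$ rather than entering $U_\infty$. However, the role of condition~(c) is played by the computations $\deg(f|_{W_1})=3$ and $\deg(f|_X)\geq 2$, both strictly greater than $\deg(f|_{U_\infty})=2$. Combined with Pilgrim's decomposition theorem, this suffices to show that $\EC\setminus\overline{U_\infty}$ is a single Jordan domain and then, via the ``no-hairs'' argument, that $\partial U_\infty$ is a Jordan curve; the same reasoning yields the Jordan property of $\partial U_0$. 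The inductive pull-back step (preimages of $U_\infty,U_0$ are Jordan because hyperbolicity places no critical points on $J(h_{\alpha_0})$) and the final argument excluding common boundary points between any two Fatou components go through essentially unchanged.

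The main obstacle is this last step: the proof of Theorem~\ref{thm:main} exploits the rigid hypothesis $f^{-1}(U)=U\cup V$, and one must carefully redo Pilgrim's complementary-component analysis when $f^{-1}(U_\infty)$ has three components, when one of the critical orbits of $f$ is disjoint from $U_\infty$, and when a companion fixed Fatou component $U_0$ must be treated symmetrically. The local-degree inequalities above are the correct substitute for condition~(c), but the bookkeeping is more delicate than in Theorem~\ref{thm:main} itself.
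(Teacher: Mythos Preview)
Your approach differs substantially from the paper's, and it contains a genuine gap.

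The paper does \emph{not} attempt to push the proof of Theorem~\ref{thm:main} through for $h_{\alpha_0}^{\circ 2}$. Instead it uses the polynomial-like mapping $(h_{\alpha_0}^{\circ 2},B',A)$, built from the round disk $B=\{|z|<1/10\}$ about $0$, together with the Straightening Theorem to conclude directly that $U_0$ is a \emph{Jordan domain} (since the hybrid class is $z\mapsto z^2$). Pilgrim's lemma then makes every Fatou component a Jordan domain, and the separate numerical check $\overline{B}\cap h_{\alpha_0}(\overline{B})=\emptyset$ immediately gives $\overline{U_0}\cap\overline{U_\infty}=\emptyset$, after which the pairwise-disjointness argument of Theorem~\ref{thm:main} applies. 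You mention the polynomial-like mapping, but only as a device to ``corroborate the simple connectedness of $U_\infty$''; the point is that it gives the much stronger Jordan property and the disjointness of the two periodic closures in one stroke, bypassing the combinatorics entirely.

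Your adaptation of Theorem~\ref{thm:main} to $f=h_{\alpha_0}^{\circ 2}$ has a concrete problem at exactly the step you flag as ``delicate''. Write $h^{-1}(U_0)=U_\infty\cup X$, where $X$ consists of the components through the two finite zeros $\zeta_1,\zeta_2$ of $z^2-3\alpha_0 z+\alpha_0$. These zeros are simple and (generically) lie in distinct Fatou components $X_1,X_2$, with $\deg(h|_{X_i}:X_i\to U_0)=1$; composing with $h|_{U_0}:U_0\to U_\infty$ of degree $2$ gives $\deg(f|_{X_i})=2$. Thus your assertion that ``$\deg(f|_X)\geq 2$, both strictly greater than $\deg(f|_{U_\infty})=2$'' is false: equality holds. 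The contradiction mechanism in the proof of Theorem~\ref{thm:main}, namely
\[
\deg(f|_{W})\leq\deg(f|_{\partial A_W})\leq\deg(f|_{\partial U})=\deg(f|_{U}),
\]
therefore gives no information for these components, and nothing you have written rules out a second complementary component of $\overline{U_\infty}$ that carries one of the $X_i$. Moreover, condition~(d) genuinely fails for $(f,U_\infty)$ since the critical point $0$ stays in $U_0$, and this condition is used not only for ``bookkeeping'' but in the no-hairs argument (Montel) and in the disjointness step. These obstructions are precisely why the paper abandons the Theorem~\ref{thm:main} framework here and reaches for the straightening theorem instead.
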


\begin{proof}
Since $h_{\alpha_0}$ is post-critically finite and hyperbolic, the Julia set of $h_{\alpha_0}$ is connected and locally connected.
In order to prove that $J(h_{\alpha_0})$ is a Sierpi\'{n}ski carpet, it is sufficient to prove that each Fatou component is bounded by a simple closed curve and all the boundaries of the Fatou components are pairwise disjoint. The proof will be based on the similar argument as in Proposition \ref{prop:parabolic-carpet}. We define a round disk $B:=\{z=x+y\ii\in\C:|z|<1/10\}$ and let $A$ be the component of $\C\setminus h_{\alpha_0}^{\circ 2}(\partial B)$ containing the origin. One can check that $\overline{B}$ is compactly contained in $A$ and $\overline{B}\cap h_{\alpha_0}(\overline{B})=\emptyset$ (Similarly, we omit the details of the numerical calculations here. See the picture in Figure \ref{Fig-carpet-parabolic} on the right). Shrinking the domain $A$ a little if necessary, we assume that $A$ is a Jordan domain. Let $B'$ be the component of $h_{\alpha_0}^{-2}(A)$ containing the origin. Then $(h_{\alpha_0},B',A)$ is a polynomial-like mapping with degree $2$.

Let $U$ and $V$ be the Fatou components containing $\infty$ and $0$ respectively. According to the Straightening Theorem of Douady and Hubbard \cite{DH85}, we know that $V$ is a Jordan domain. Since all the Fatou components will be iterated to the cycle of the periodic Fatou components $U\leftrightarrow V$ and the Julia set $h_{\alpha_0}$ is connected, it follows that all the Fatou components are Jordan domains \cite[Proposition 2.8]{Pil96}. Since $\overline{B}\cap h_{\alpha_0}(\overline{B})=\emptyset$, it means that $\overline{B}\cap\overline{U}=\emptyset$. By a similar argument to the proof of Theorem \ref{thm:main}, all the boundaries of the Fatou components are pairwise disjoint. Hence the Julia set of $h_{\alpha_0}$ is a Sierpi\'{n}ski carpet.
\end{proof}

\section*{Acknowledgements}

This work is supported by the National Natural Science Foundation of China (grant Nos.\,11401298 and 11671092) and the Fundamental Research Funds for the Central Universities (grant No. 0203-14380013). The author would like to thank the referee for the careful reading and useful suggestions.



\begin{thebibliography}{99}

\bibitem[AY09]{AY09}Magnus Aspenberg and Micheal Yampolsky, \textit{Mating non-renormalizable quadratic polynomials}, Comm. Math. Phys. \textbf{287} (2009), no. 1, 1-40.

\bibitem[BLM16]{BLM16}Mario Bonk, Mikhail Lyubich, and Sergei Merenkov, \textit{Quasisymmetries of Sierpi\'{n}ski carpet Julia sets}, Adv. Math. \textbf{301} (2016), 383-422.

\bibitem[DFGJ14]{DFGJ14}Robert L. Devaney, N\'{u}ria Fagella, Antonio Garijo, and Xavier Jarque, \textit{Sierpi\'{n}ski curve Julia sets for quadratic rational maps}, Ann. Acad. Sci. Fenn. Math. \textbf{39} (2014), no. 1, 3-22.

\bibitem[DLU05]{DLU05}Robert L. Devaney, Daniel M. Look, and David Uminsky, \textit{The escape trichotomy for singularly perturbed rational maps}, Indiana Univ. Math. J.  \textbf{54} (2005), no. 6, 1621-1634.

\bibitem[DP09]{DP09}Robert L. Devaney and Kevin M. Pilgrim, \textit{Dynamic classification of escape time Sierpi\'{n}ski curve Julia sets}, Fund. Math. \textbf{202} (2009), no. 2, 181-198.

\bibitem[DH85]{DH85}Adrien Douady and John H. Hubbard, \textit{On the dynamics of polynomial-like mappings}, Ann. Sci. \'{E}cole Norm. Sup. \textbf{18} (1985), no. 2, 287-343.

\bibitem[FY15]{FY15} Jianxun Fu and Fei Yang, \textit{On the dynamics of a family of singularly perturbed rational maps}, J. Math. Anal. Appl. \textbf{424} (2015), no. 1, 104-121.

\bibitem[KK00]{KK00} Michael Kapovich and Bruce Kleiner, \textit{Hyperbolic groups with low-dimensional boundary}, Ann. Sci. \'{E}c Norm. Sup. \textbf{33} (2000), no. 5, 647-669.

\bibitem[Mil93]{Mil93} John Milnor, \textit{Geometry and dynamics of quadratic rational maps, with an appendix by J. Milnor and L. Tan}, Exper. Math. \textbf{2} (1993), no. 1, 37-83.

\bibitem[Mil06]{Mil06} John Milnor, \textit{Dynamics in One Complex Variable: Third Edition}, Annals of Mathematics Studies, \textbf{160}, Princeton Univ. Press, Princeton, NJ, 2006.

\bibitem[Pil94]{Pil94} Kevin M. Pilgrim, \textit{Cylinders for iterated rational maps}, Thesis, University of California, Berkeley, 1994.

\bibitem[Pil96]{Pil96} Kevin M. Pilgrim, \textit{Rational maps whose Fatou components are Jordan domains}, Ergod. Th. $\&$ Dynam. Sys. \textbf{16} (1996), no. 6, 1323-1343.

\bibitem[QYY16]{QYY16}Weiyuan Qiu, Fei Yang, and Yongcheng Yin, \textit{Quasisymmetric geometry of the Julia sets of McMullen maps}, arXiv: math.DS/1308.4324v2, 2016.

\bibitem[QYZ14]{QYZ14}Weiyuan Qiu, Fei Yang, and Jinsong Zeng, \textit{Quasisymmetric geometry of the carpet Julia sets}, arXiv: math.DS/1403.2297v3, 2014.

\bibitem[Ste06]{Ste06}Norbert Steinmetz, \textit{On the dynamics of the McMullen family $R(z)=z^m+\lambda/z^\ell$}, Conform. Geom. Dyn. \textbf{10} (2006), 159-183.

\bibitem[Ste08]{Ste08}Norbert Steinmetz, \textit{Sierpi\'{n}ski and non-Sierpi\'{n}ski curve Julia sets in families of rational maps}, J. Lond. Math. Soc. \textbf{78} (2008), no. 2, 290-304.

\bibitem[TY96]{TY96}Lei Tan and Yongcheng Yin, \textit{Local connectivity of the Julia set for geometrically finite rational maps}, Sci. China Ser. A \textbf{39} (1996), no. 1, 39-47.

\bibitem[Why42]{Why42}Gordon T. Whyburn, \textit{Analytic Topology}, American Mathematical Society Colloquium Publications, Vol \textbf{28}, AMS, New York, 1942.

\bibitem[Why58]{Why58}Gordon T. Whyburn, \textit{Topological characterization of the Sierpi\'{n}ski curves}, Fund. Math. \textbf{45} (1958), 320-324.

\bibitem[XQY14]{XQY14}Yingqing Xiao, Weiyuan Qiu, and Yongcheng Yin, \textit{On the dynamics of generalized McMullen maps}, Ergod. Th. $\&$ Dynam. Sys. {\bf 34} (2014), no. 6, 2093-2112.

\end{thebibliography}
\end{document}